\author{Jeffrey Giansiracusa$^1$} 
\email{jeffrey.giansiracusa@durham.ac.uk} 
\author{Noah  Giansiracusa$^2$} 
\email{ngiansiracusa@bentley.edu}
\DeclareSymbolFont{cmlargesymbols}{OMX}{cmex}{m}{n}
\DeclareMathSymbol{\mycoprod}{\mathop}{cmlargesymbols}{"60}
\DeclareFontFamily{OT1}{pzc}{}
\DeclareFontShape{OT1}{pzc}{m}{it}{<-> s * [1.10] pzcmi7t}{}
\DeclareMathAlphabet{\mathpzc}{OT1}{pzc}{m}{it}
\numberwithin{equation}{subsection}
\newtheorem{thmA}{Theorem}
\newtheorem{theorem}{Theorem}[subsection]  %theorems numbered within subsections
\newtheorem{lemma}[theorem]{Lemma} 
\newtheorem{proposition}[theorem]{Proposition}
\newtheorem{corollary}[theorem]{Corollary}
\theoremstyle{remark} % styled differently... not italicized
\newtheorem{definition}[theorem]{Definition}
\newtheorem{question}[theorem]{Question}
\newtheorem{remark}[theorem]{Remark}
\newtheorem{example}[theorem]{Example}
\newcommand{\R}{\mathbb{R}}
\newcommand{\N}{\mathbb{N}}
\newcommand{\Z}{\mathbb{Z}}
\newcommand{\T}{\mathbb{T}}
\newcommand{\Fun}{{\mathbb{F}_1}}
\newcommand{\spec}{\mathrm{Spec}\:}
\newcommand{\Trop}{\mathpzc{Trop}}
\newcommand{\trop}{\mathpzc{trop}}
\newcommand{\val}{\mathpzc{Val}}
\newcommand{\bend}{\mathpzc{B}}
\newcommand{\supp}{\mathrm{supp}}
\newcommand{\Sch}{\mathrm{Sch}}
\newcommand{\an}{\mathrm{an}}
\newcommand{\Lan}{\mathrm{Lan}}
\newcommand{\aff}{\mathit{aff}}
\newcommand{\op}{\mathrm{op}}
\newcommand{\Frac}{\mathrm{Frac}}
\definecolor{purple}{rgb}{0.5,0,0.5}
\definecolor{brown}{rgb}{0.5,0.3,0.1}
\DeclareMathOperator*{\colim}{colim} %* means limits directly above/below
\mathchardef\mhyphen="2D
\title{The universal tropicalization and the Berkovich analytification} 
\date{9 August, 2022}
\begin{document}
\begin{abstract}
  Given an integral scheme $X$ over a non-archimedean valued field $k$, we construct a
  universal closed embedding of $X$ into a $k$-scheme equipped with a model over the field with one
  element $\Fun$ (a generalization of a toric variety).  An embedding into such an ambient space
  determines a tropicalization of $X$ by \cite{GG1}, and we show that the set-theoretic
  tropicalization of $X$ with respect to this universal embedding is the Berkovich analytification
  $X^{\an}$.  Moreover, using the scheme-theoretic tropicalization of \cite{GG1}, we obtain a
  tropical scheme $\Trop_{univ}(X)$ whose $\T$-points give the analytification and which canonically
  maps to all other scheme-theoretic tropicalizations of $X$.  This makes precise the idea that the
  Berkovich analytification is the universal tropicalization.  When $X=\spec A$ is affine, we show
  that $\Trop_{univ}(X)$ is the limit of the tropicalizations of $X$ with respect to all embeddings
  in affine space, thus giving a scheme-theoretic enrichment of a well-known result of Payne.
  Finally, we show that $\Trop_{univ}(X)$ represents the moduli functor of semivaluations on $X$,
  and when $X=\spec A$ is affine there is a universal semivaluation on $A$ taking values in the
  idempotent semiring of regular functions on the universal tropicalization.
\end{abstract}
\maketitle

\section{Introduction}
In recent years, two methods of translating problems from algebraic geometry into other landscapes,
both based on non-archimedean valuations, have become increasingly important.  The first is
\emph{tropicalization} \cite{Kapranov, MS,Mikhalkin-ICM}, which reduces the complexity of varieties
by turning them into finite polyhedral complexes.  The second is Berkovich's non-archimedean
\emph{analytification} \cite{Berkovich}, where even the affine line becomes an intricate
fractal-like infinitely branching tree.

Analytification is intrinsic, whereas tropicalization depends on the choice of an embedding into a
toric variety.  Payne showed that these two processes are intimately related: the Berkovich
analytification of an affine variety over a complete valued field is the category-theoretic limit of
its tropicalizations with respect to embeddings in affine spaces (see \cite[Theorem 1.1]{Payne}, as
well as \cite[Theorem 4.2]{Payne}, \cite{Payne-inv2}, and \cite[Theorem 6.4]{Ulirsch-divisorial} for
global variants).  Therefore the analytification is often thought of as an intrinsic and universal
tropicalization.  This view is also hinted at in the unfinished paper \cite{Kont-Tschink}, and
further justified by the existence of skeletons --- polyhedral complexes resembling tropical
varieties onto which the analytification admits a strong deformation retraction
\cite{Berkovich,Berkovich-locally-contractible-I,Berkovich-locally-contractible-II,Hrushovski-Loeser-tame,Thuillier,Gubler-skeleton,Kontsevich-Soibelman, Nicaise-Mustata}.

A second way to think of the analytification is as a space of
semivaluations.  Given a non-archimedean valued field $k$ and a $k$-algebra $A$, the Berkovich
analytification of $\spec A$ can be defined as the space of all rank-one semivaluations on $A$
compatible with the valuation on $k$, and this description extends to non-affine schemes with an
appropriate notion of (semi)valuation in the non-affine case.

Thus we have two heuristic devices with which to view the analytification: as the universal
tropicalization, and as the moduli space of semivaluations.  Using the ideas developed in \cite{GG1} --- (1) a
generalization of the ambient spaces in which to tropicalize from toric varieties to schemes
equipped with models over the field with one element, (2) a refinement of tropicalization from sets to
semiring schemes, and (3) a generalization of semivaluations to take values in arbitrary idempotent
semirings --- we are now able to make both of these ideas into precise statements.

\subsection{The universal tropicalization}

The embeddings originally used for tropicalization were taken to be in an algebraic torus, as in
\cite{Sturmfels-first-steps, Kapranov}, but Payne and Kajiwara showed that there is a natural
extension to arbitrary toric varieties simply by computing the tropicalization separately for each
torus-invariant stratum and assembling the results \cite{Payne, Kajiwara}.  The underlying reason that
toric varieties yield tropicalizations of their subschemes is that they have a distinguished class
of monomials in the coordinate ring of each torus-invariant affine patch, and all the gluing of
affine patches occurs by localization of monomials.  

One is thus led immediately to consider a more general class of ambient spaces in which to
tropicalize subschemes: $k$-schemes $Z$ equipped with a model over $\Fun$, the field with one
element.  This means an $\Fun$-scheme $Z'$ (essentially in the sense of \cite{Toen-Vaquie} or
\cite{Kato, Deitmar1, Deitmar2}) and an isomorphism $Z \cong Z'\times_{\spec \Fun} \spec k$, or in
more concrete terms, it means that there is an open affine covering of $Z$ for which the coordinate
rings are presented as monoid rings, and the localizations with which these affine patches are glued
are induced by localizations of the monoids.  See \cite[\S3]{GG1} and the references therein for
details.  This generalization of toric varieties allows, for instance, non-finite type schemes, and
more limits exist in this category.  In particular, while there is no initial object in the category
of embeddings of a scheme $X$ into toric varieties, the category of embeddings into schemes
equipped with an $\Fun$-model does have an initial object, which we denote $X\hookrightarrow
\widehat{X}$.  This universal embedding is completely explicit; on each affine patch it is the unit
transformation of the base-change adjunction between affine $\Fun$-schemes and affine $k$-schemes.
More concretely, if $X=\spec A$ is affine, $\widehat{X}$ is the spectrum of the monoid ring $k[A]$,
and $X \hookrightarrow \widehat{X}$ corresponds to the evaluation map $k[A] \twoheadrightarrow A$;
so the ``monomials'' are the elements of $A$.  This construction extends to the case when $X$ is not
affine.

In \cite{GG1} we showed how one can tropicalize a closed subscheme of any integral $k$-scheme equipped
with a model over $\Fun$.  In this paper we show that one can obtain the Berkovich analytification
not merely as a limit of tropicalizations, but as a tropicalization itself.

\begin{thmA}\label{thm:UnivBerk}
  Let $X$ be an integral scheme over a non-archimedean valued field $k$.  The
  set-theoretic tropicalization of $X$ with respect to the universal embedding $X \hookrightarrow
  \widehat{X}$ is canonically identified with the underlying set of the Berkovich analytification
  $X^{\an}$. Moreover, this identification becomes a homeomorphism when the tropicalization of $X$
  is endowed with the ``strong Zariski topology'' in which the closed subsets are given by closed
  subschemes.
\end{thmA}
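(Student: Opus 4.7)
The plan is to reduce to the affine case, where the identification becomes essentially tautological, then to globalize via an affine cover, and finally to compare topologies.

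In the affine case $X = \spec A$, the universal embedding is $\spec A \hookrightarrow \spec k[A]$ and is cut out by the kernel ideal $I$ of the evaluation map $k[A] \twoheadrightarrow A$.  Unwinding the set-theoretic tropicalization of \cite{GG1}, a $\T$-point of $\Trop_{univ}(X)$ is a multiplicative map $v \co A \to \T$ extending the valuation $v_k \co k \to \T$ and satisfying, for every $f = \sum_i c_i a_i \in I$, the bend relation that the tropical expression $\min_i(v_k(c_i) + v(a_i))$ attains its minimum at least twice.  For any valuation $v$ in Berkovich's sense, applying the ultrametric inequality to the identity $-c_j a_j = \sum_{i \neq j} c_i a_i$ in $A$ yields exactly this condition; conversely, applying the bend relation associated to the identity $x + y - (x+y) = 0$ recovers $v(x+y) \geq \min(v(x), v(y))$ from the multiplicativity of $v$.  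Hence the $\T$-points of $\Trop_{univ}(X)$ correspond canonically to rank-one valuations on $A$ compatible with $v_k$, which is Berkovich's description of $|X^{\an}|$.

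For general integral $X$, the strategy is to cover by affine opens $\{U_\alpha\}$.  Since $Y \mapsto \widehat{Y}$ arises as the unit of a base-change adjunction, it sends open immersions to open immersions, so $\widehat{U_\alpha} \hookrightarrow \widehat{X}$ is open; the tropicalization functor of \cite{GG1} then glues accordingly, as does $X^{\an}$.  One verifies that on overlaps $U_\alpha \cap U_\beta$ a valuation on the coordinate ring of $U_\alpha$ factors through that of $U_\beta$ precisely when the corresponding tropical point lies in the tropicalization of the overlap, so the affine identifications assemble into a canonical bijection $|\Trop_{univ}(X)| \cong |X^{\an}|$.

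For the topological statement, the key observation is that each element $g$ of the coordinate semiring of $\Trop_{univ}(X)$ is represented by a finite tropical polynomial expression in elements of $A$, so evaluation $v \mapsto v(g)$ is a finite $\min$-and-$+$ expression in the continuous functions $v \mapsto v(a)$, hence continuous for the Berkovich topology.  A closed subscheme of $\Trop_{univ}(X)$ is cut out by a congruence generated by relations $g_\lambda \sim h_\lambda$, whose $\T$-points form $\bigcap_\lambda \{v : v(g_\lambda) = v(h_\lambda)\}$, an intersection of Berkovich-closed sets; thus every strong Zariski closed set is Berkovich closed.  Conversely, each Berkovich-subbasic closed set $\{v : v(f) \leq r\}$ (resp.\ $\{v : v(f) \geq r\}$) is the $\T$-point locus of the closed subscheme cut out by the congruence $f \oplus r \sim f$ (resp.\ $f \oplus r \sim r$) in the bend semiring.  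The main obstacle will be handling the topology globally---in particular confirming that the strong Zariski topology on $|\Trop_{univ}(X)|$ is compatible with the affine gluing above, so that the affine topological identifications assemble into a single homeomorphism.
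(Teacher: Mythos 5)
Your proposal is correct and follows the same overall strategy as the paper (reduce to the affine case, identify $\T$-points of $\Trop_{univ}(\spec A)$ with valuations on $A$, then compare topologies), but the set-theoretic step is executed differently.  You argue directly in $\T$ using the ``maximum attained twice'' reading of the bend relations, proving the forward direction via the ultrametric inequality applied to an arbitrary $\sum c_i a_i \in \ker(ev)$ and the converse by applying the bend relation to the identity $x+y-(x+y)=0$ in $A$.  This is more elementary and self-contained, but crucially exploits the total order on $\T$.  The paper instead proves the strictly stronger Theorem~\ref{thm:moduli-of-valuations}, that $\Trop_{univ}(\spec A)$ represents the functor of valuations valued in an \emph{arbitrary} idempotent $S$-algebra $T$, where no total order is available and one must argue algebraically with the equalities defining the bend congruence; the key technical inputs are Proposition~\ref{kernel-of-ev} (a $\Z$-module generating set for $\ker(ev)$) and Proposition~\ref{prop:strongtropbas} (this generating set is a \emph{strong tropical basis}).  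Theorem~\ref{thm:UnivBerk} then follows by specializing $T=\T$.  In effect your forward/converse argument is the $\T$-specific shadow of Proposition~\ref{prop:strongtropbas} together with the valuation-axiom verification in the proof of Theorem~\ref{thm:moduli-of-valuations}.

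One thing you should make explicit: your description of a $\T$-point as a multiplicative map satisfying the bend relations only ``for every $f=\sum c_i a_i \in I$'' tacitly uses the fact that the congruence $\bend\trop(I)$ is generated (as an $S$-module/congruence) by the bend relations of the coefficient-wise valuations $\nu(f)$ for $f\in I$, rather than needing all elements of the $\T$-linear span $\trop(I)$; this is \cite[Lemma~5.1.3(2)]{GG1} and is the first step of the paper's proof of Proposition~\ref{prop:strongtropbas}.  Your converse direction also needs the type-(1) relations $\lambda x_a - x_{\lambda a}$ to obtain compatibility with $\nu_k$ (you state this as a hypothesis on the $\T$-point, but in the ``conversely'' paragraph you only derive the ultrametric inequality).

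For the topology, your argument and the paper's (Lemma~\ref{lem:infinite-product-topology} plus the observation that the Berkovich topology is the subspace topology from $\T^{A}$) are essentially the same: you show closed subschemes have Berkovich-closed $\T$-point loci, and conversely realize the subbasic Berkovich-closed sets as loci of explicit one-variable congruences $x_f \oplus r \sim x_f$ and $x_f \oplus r \sim r$; the paper packages the same computation on the ambient affine space $\T^\Lambda$.  You work in $\min$-plus conventions while the paper uses $\max$-plus, which is only a sign change and not an error.
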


\begin{remark}
It follows immediately from the universal property of the embedding $X\hookrightarrow
\widehat{X}$, and functoriality of tropicalization, that the corresponding tropicalization is the
limit of \emph{all} tropicalizations (not just those coming from embeddings in toric varieties).
The statement that one often obtains the same limit when restricting to a class of embeddings in
toric varieties \cite{Payne,Payne-inv2} requires an additional argument.
\end{remark}

\begin{remark}
Given a system of toric embeddings such that the corresponding limit of
tropicalizations is the analytification (cf., \cite{Payne-inv2}), one can obtain the analytification
as the tropicalization with respect to the (typically infinite) product of these toric embeddings.
However, not all varieties can be embedded in a toric variety \cite{Toric-emb},
so for such varieties one must leave the realm of toric tropicalization and embrace more general
$\Fun$-schemes in order to recover analytification as a tropicalization.
\end{remark}

The main construction introduced in \cite{GG1} is a scheme-theoretic refinement of tropicalization
that reduces to the set-theoretic tropicalization upon passing to the set of $\T$-points, where $\T$
is the tropical semiring $(\R\cup\{\infty\},\mathrm{min},+)$.  In brief, given a closed embedding
locally described as a quotient of a monoid ring $k[M]$, the tropicalization is a quotient of the
semiring $\T[M]$, and the equations of the tropicalization are produced by valuating the
coefficients of the original equations and then applying the \emph{bend relations} (see \S\ref{sec:tropreview} for a review). 

When we apply scheme-theoretic tropicalization to the universal embedding $X\hookrightarrow
\widehat{X}$ we obtain a semiring scheme underlying the analytification.

\begin{thmA}\label{thm:UnivBerk-scheme-theoretic}
  Let $X$ be an integral scheme over a non-archimedean valued ring $k$.  The
  scheme-theoretic tropicalization associated with the embedding $X \hookrightarrow \widehat{X}$
  admits a canonical morphism of $\T$-schemes to the tropicalization associated with any other
  closed embedding of $X$ into a locally integral scheme with $\Fun$-model.  Upon passing to $\T$-points,
  these morphisms reduce to the canonical projections from the Berkovich analytification to all
  set-theoretic tropicalizations.
\end{thmA}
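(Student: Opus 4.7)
The plan is to combine the universal property of the embedding $X \hookrightarrow \widehat{X}$ with the functoriality of scheme-theoretic tropicalization from \cite{GG1}, and then to read off the statement on $\T$-points using Theorem~\ref{thm:UnivBerk}.

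First, given any closed embedding $X \hookrightarrow Z$ with $Z$ an integral $k$-scheme equipped with an $\Fun$-model, the universal property of $\widehat{X}$ supplies a unique morphism $\widehat{X} \to Z$ of $k$-schemes with $\Fun$-model making the evident triangle with the two embeddings of $X$ commute. This is essentially automatic from the initiality of $X \hookrightarrow \widehat{X}$ among such embeddings, as recalled in the introduction, and explains why the resulting morphism on tropicalizations will be canonical.

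Next, I would invoke the functoriality of scheme-theoretic tropicalization established in \cite{GG1}: a morphism between ambient $\Fun$-schemes that is compatible with closed embeddings of a fixed subscheme induces a morphism of the associated tropicalizations as semiring schemes. Applied to $\widehat{X} \to Z$ and the two embeddings of $X$, this produces the desired canonical morphism of $\T$-schemes $\Trop_{univ}(X) \to \Trop(X\hookrightarrow Z)$. The uniqueness clause in the universal property of $\widehat{X}$ propagates through to a uniqueness statement for this morphism.

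For the second half of the theorem, I would pass to $\T$-points, which is a functor on $\T$-schemes, so the morphisms from the previous step descend to maps of underlying sets. Theorem~\ref{thm:UnivBerk} identifies the $\T$-points of $\Trop_{univ}(X)$ with the underlying set of $X^{\an}$. It then remains to identify the induced map on underlying sets with Payne's canonical projection. In the affine case $X = \spec A$, a $\T$-point of $\widehat{X} = \spec k[A]$ is a valuation on $A$; pulling back along the dualized morphism of monoid/coordinate rings produces the corresponding valuation on the coordinate monoid of the ambient patch of $Z$, which is exactly the formula for the Payne projection. Globalization follows by patching along the affine cover implicit in the $\Fun$-model of $Z$.

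The main obstacle will be this final identification. The abstract functoriality machinery of \cite{GG1} yields a morphism of semiring schemes, but checking that it reproduces the concrete formula for Payne's projection on $\T$-points requires carefully unwinding how the bend relations interact with the surjection $k[A] \twoheadrightarrow A$, and verifying that the valuation-theoretic interpretation of $\T$-points of the bend locus is preserved under the induced morphism. Once the affine case is handled, the global statement should follow by gluing, using that the $\Fun$-model on $\widehat{X}$ pulls back compatibly to the $\Fun$-model on $Z$ over each affine patch.
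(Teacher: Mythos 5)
Your proposal follows essentially the same route as the paper: the first assertion is Proposition~\ref{prop:univ-property-of-univ-trop}, proved exactly as you say by combining the universal property of $X\hookrightarrow\widehat X$ (Proposition~\ref{prop:globalfactor}) with functoriality of scheme-theoretic tropicalization, and the second assertion is the final proposition of \S\ref{sec:topology}'s successor subsection, where the paper restricts to affine patches, expresses the morphism as the one induced by $\varphi^\flat\colon B\to M(A)$, and observes that composing with $w^\flat\colon M(A)\to M(\T)$ recovers Payne's formula. The ``main obstacle'' you flag is genuine but is precisely the unwinding the paper carries out there, so your sketch is an accurate blueprint of the argument.
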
 

Accordingly, we call this scheme the \emph{universal tropicalization} of $X$ and denote it by
$\Trop_{univ}(X)$. 

Since $\Trop_{univ}(X)$ is the initial object in the category of tropicalizations (with morphisms
induced by morphisms of embeddings), it is trivially the limit (in the category of $\T$-schemes) of
the diagram of \emph{all} scheme-theoretic tropicalizations.  We also restrict to the subcategory of
tropicalizations from embeddings in affine space and prove a scheme-theoretic refinement of Payne's
affine limit result:

\begin{thmA}
  If $X$ is an affine integral scheme of finite type over $k$, then the limit of its tropicalizations with respect
  to all closed embeddings in finite-dimensional affine spaces is naturally isomorphic as a $\T$-scheme to the
  universal tropicalization $\Trop_{univ}(X)$.
\end{thmA}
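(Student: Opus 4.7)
The approach is to leverage the explicit description $\widehat{X} = \spec k[A]$ (when $X = \spec A$) and establish the isomorphism via the universal property of a limit. Each closed embedding $\phi \co X \hookrightarrow \mathbb{A}^n$ corresponds to a choice of $k$-algebra generators $a_1, \ldots, a_n \in A$, equivalently a monoid morphism $\alpha_\phi \co \mathbb{N}^n \to A$ into the multiplicative monoid of $A$ whose image generates $A$ as a $k$-algebra. This $\alpha_\phi$ induces a morphism of ambient $\Fun$-schemes $\widehat{X} \to \mathbb{A}^n$ through which the universal embedding factors.

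The first step is to apply functoriality of scheme-theoretic tropicalization from \cite{GG1} to these $\Fun$-morphisms, producing canonical $\T$-scheme maps $\Trop_{univ}(X) \to \Trop_\phi(X)$ compatible with the transition morphisms between affine embeddings---which on the $\Fun$-model side are monoid maps $\mathbb{N}^n \to \mathbb{N}^{n'}$ over $A$. This exhibits $\Trop_{univ}(X)$ as a cone over the diagram of affine tropicalizations, inducing a canonical comparison $\Trop_{univ}(X) \to \lim_\phi \Trop_\phi(X)$ that the theorem asserts to be an isomorphism.

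The main step is to verify this, which dually amounts to establishing the $\T$-algebra colimit identification
\begin{equation*}
  \T[A]/\bend(I) \;\cong\; \colim_{\phi} \T[\mathbb{N}^n]/\bend(I_\phi),
\end{equation*}
with $I = \ker(k[A] \twoheadrightarrow A)$ and $I_\phi = \ker(k[\mathbb{N}^n] \twoheadrightarrow A)$. The key observation is that $A$, viewed as a multiplicative monoid, is the colimit of the diagram of finitely generated free monoids $\mathbb{N}^n$ mapping to $A$ with over-$A$ monoid morphisms between them: every element of $A$ occurs as some $\alpha_\phi(e_i)$, and every multiplicative relation among chosen generators is realized by a suitable such morphism (for instance, the morphism $\mathbb{N}^{\{a, b, ab\}} \to \mathbb{N}^{\{a, b\}}$ sending $e_{ab} \mapsto e_a + e_b$ identifies $e_{ab}$ with $e_a e_b$ in the colimit). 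Consequently $k[A] = \colim k[\mathbb{N}^n]$, and by exactness of filtered colimits of $k$-modules, $I = \colim I_\phi$. Since the bend construction is finitary---each generator of the congruence $\bend(I)$ arises from a single polynomial $f \in I$ lying in some $I_\phi$---we conclude $\bend(I) = \colim \bend(I_\phi)$ and the desired identification follows.

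The principal obstacle I anticipate is the careful verification that the indexing category of affine embeddings is both rich and filtered enough for the colimit description of $A$ and the exactness of the kernel functor. Here the finite-type hypothesis on $X$ is essential: it ensures that any finite collection of elements and multiplicative relations in $A$ can be realized in a single embedding by adjoining the necessary generators to a chosen finite $k$-algebra generating set.
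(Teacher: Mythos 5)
Your proposal takes essentially the same route as the paper's proof: both establish that $M(A)$ is the colimit of the finitely generated free monoids over $M(A)$ (the paper's Lemma~\ref{lem:colim}), and then argue that the resulting colimit congruence on $M(A)\otimes_\Fun\T$ equals $\bend\trop(\ker ev)$. The difference is in the second step. The paper routes through its explicit strong tropical basis (Propositions~\ref{kernel-of-ev} and~\ref{prop:strongtropbas}) and only needs to check that the bend relations of the two specific basis types $\lambda x_a - x_{\lambda a}$ and $x_a + x_b + x_c$ arise from some affine embedding, whereas you argue directly for an arbitrary $f\in\ker(ev)$. Both versions ultimately rest on \cite[Lemma~5.1.3(2)]{GG1} --- that $\bend\trop(I)$ is generated by the bend relations $\bend(\nu(f))$ for single elements $f\in I$ --- which you invoke implicitly as ``the bend construction is finitary''; this is a substantive lemma and should be cited explicitly rather than asserted as obvious.

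Two points need tightening. First, your appeal to exactness of filtered colimits to get $I=\colim I_\phi$ is both unnecessary and dubious: the index category $\mathcal{C}^{\op}_{\aff}$ is not obviously filtered (coequalizing parallel arrows between free monoids over $M(A)$ requires some care), but all the argument actually needs is right exactness, i.e.\ that every $f\in I$ is the image of some $g\in I_\phi$, which is elementary once one adjoins the monomials of $f$ to a chosen finite generating set. Second, and more importantly, you must verify that the coefficient-wise valuation commutes with the pushforward $k[\mathbb{N}^n]\to k[A]$ for the chosen lift: if $\phi$ sent two distinct monomials $x_{m_1}, x_{m_2}$ of $g$ to the same $x_a$, then the image of $\nu(g)$ would involve $\max(\nu(\lambda_1),\nu(\lambda_2))$ while $\nu(f)$ involves $\nu(\lambda_1+\lambda_2)$, and these can differ. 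Writing $f=\sum_i\lambda_i x_{a_i}$ with the $a_i$ distinct and choosing the generating set $\mathcal{S}$ to contain all the $a_i$ avoids this collision, so that $g=\sum_i\lambda_i x_{e_{a_i}}$ has $\nu(g)\mapsto\nu(f)$ and hence $\bend(\nu(g))\mapsto\bend(\nu(f))$. You gesture at this in your final paragraph (``adjoining the necessary generators''), but the specific role of this choice --- making coefficient-wise valuation compatible with the map of monoid algebras --- is the crux and is precisely what the paper's strong tropical basis makes automatic.
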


\begin{remark}
For simplicity, we have restricted attention to the affine case of this theorem.  A modification of
the argument along the lines of \cite[Theorem 4.2]{Payne} should allow one to easily prove the
corresponding statement for embeddings of a quasiprojective variety $X$ into toric varieties.
\end{remark}

\subsection{The moduli space of semivaluations}

The category of $\T$-schemes and the universal tropicalization $\Trop_{univ}(X)$ allow us to make
precise the idea that the Berkovich analytification is a moduli space of semivaluations.  Let $A$ be a
$k$-algebra, where $k$ is a field equipped with a non-archimedean valuation $\nu: k \to \T$.  Recall
that the points of the analytification of $\spec A$ are the rank-one semivaluations on $A$ compatible with $\nu$.

In \cite[Definition 2.5.1]{GG1} and \cite[\S4.2]{Macpherson} a generalization of the notion of
semivaluation on a ring was introduced by replacing $\T$ with an arbitrary idempotent semiring $S$ --- a
semivaluation in this sense is now a multiplicative map $A \to S$ that satisfies a certain subadditivity
condition with respect to the canonical partial order on $S$.  If $k$ is equipped with a semivaluation
$\nu: k\to S$, and $T$ is an $S$-algebra, then a semivaluation $A\to T$ is said to be \emph{compatible}
with $\nu$ if the square
\[
\begin{diagram}
\node{k} \arrow{e} \arrow{s} \node{S} \arrow{s} \\
\node{A} \arrow{e} \node{T}
\end{diagram}
\]
commutes.

\begin{thmA}\label{thm:intro-moduli-of-semivaluations}
  Let $k$ be a field, $S$ an idempotent semiring, and $\nu: k \to S$ a semivaluation.  Given an integral
  $k$-algebra $A$, the $S$-scheme $\Trop_{univ}(\spec A)$ represents the functor on affine
  $S$-schemes sending $\spec T$ to the set of semivaluations $A\to T$ compatible with $\nu$.  In
  particular, there is a universal semivaluation on $A$ compatible with $\nu$ and it takes values in the
  semiring of regular functions on the universal tropicalization.
\end{thmA}

\begin{remark}
 This moduli functor of semivaluations was first shown to be representable in semiring schemes by
 MacPherson in \cite[Theorem 6.24]{Macpherson}.   Thus, the main novelty of this theorem is to
 identify the representing scheme with the universal tropicalization.
\end{remark}

Note that the generalized semivaluations we consider here include higher rank Krull (semi)valuations (where
$S$ is a totally ordered idempotent semifield).  Thus the universal tropicalization contains
information about Huber's adic space analytification \cite{Huber} in addition to the rank-one
information of the Berkovich analytification.  Note also that in order for the universal semivaluation
to exist, the total ordering on the value group for semivaluations must indeed be weakened to a partial
ordering, as in \cite[Definition 2.5.1]{GG1}.

\subsection*{Conventions}
Throughout this paper all algebraic objects will be assumed to be commutative.  Monoids, rings, and
semirings are always assumed to have a multiplicative unit, and semirings are always assumed to have
an additive unit as well.

\subsection*{Acknowledgements}
The first author was supported by EPSRC grant EP/I003908/2 and the second author by the NSF
postdoctoral fellowship DMS-1204440.  We thank Martin Ulirsch and Tommaso de Fernex for useful
comments and discussions. We indebted to Oliver Lorscheid for bringing to our attention a mistake in
the earlier version of this paper; in that version the extension of the universal embedding from
affine to non-affine schemes was not correct.  Finally, we thank the anonymous referee for
insightful comments that significantly improved parts of this paper.

\section{The universal embedding}

Let $X$ be a scheme over a ring $R$.  In this section we will construct an
embedding $X\hookrightarrow \widehat{X}$ that is universal among embeddings of $X$ that determine
tropicalizations of $X$. i.e., embeddings over $\spec R$ into schemes equipped with a locally
integral model over the field with one element $\Fun$.

\subsection{The evaluation map}

We shall use the naive version of the field with one element $\Fun$ as put forward by \cite{Kato,
  Deitmar1, Toen-Vaquie}.  Rather than define $\Fun$ as an object directly, one instead specifies
what its categories of modules and algebras should be.

\begin{definition}
  An \emph{$\Fun$-module} is a set equipped with a distinguished basepoint.  An
  \emph{$\Fun$-algebra} is a monoid-with-zero, i.e., a commutative monoid $B$ (written
  multiplicatively) with a multiplicative unit $1$ and an element $0$ such that $b \cdot 0 = 0$ for
  all $b\in B$.  An $\Fun$-algebra is said to be \emph{integral} if the set of nonzero elements
  forms a monoid (i.e., there are no zero-divisors) that is cancellative (i.e., the canonical map
  from this monoid to its group completion is injective).  A \emph{homomorphism} of $\Fun$-algebras
  is a monoid homomorphism sending $0$ to $0$.
\end{definition}

\begin{remark}
Note that an $\Fun$-algebra with no zero-divisors need not be integral.  For example, the
$\Fun$-algebra $B=\{0, 1, x, y, y^2, y^3, \cdots\}$ with the relation $xy=y^2$ has no zero-divisors,
but the monoid of non-zero elements fails to be cancellative.
\end{remark}

Let $R$ be a (semi)ring.  There is a forgetful functor 
\[
R\text{-mod} \to \Fun\text{-mod}
\]
that sends an $R$-module to its underlying set with $0$ as the distinguished point; we will refer to
this as a \emph{scalar restriction} functor.  It admits a left adjoint that sends an $\Fun$-module
to the free $R$-module generated by the non-basepoint elements; we call this the \emph{scalar
extension} functor and denote it by $-\otimes_\Fun R$.  This adjunction at the level of modules
induces an adjoint pair of functors
\begin{equation}\label{eq:adjunction}
-\otimes_\Fun R: \Fun\text{-alg} \leftrightarrows R\text{-alg} : M(-).
\end{equation}
The scalar restriction functor sends an $R$-algebra $A$ to its underlying multiplicative monoid
$M(A)$; we will write $x_a$ for the element corresponding of $a\in A$.  The scalar extension
functor sends an $\Fun$-algebra $B$ to the $R$-algebra with one generator $x_b$ for each element
$b\in B$ and the relations
\begin{align*}
x_a x_b &= x_{ab} \:\:\:\: \text{for $a,b\in B$},\\
x_1 &= 1,\\
x_0 &=0.
\end{align*}
Given an $R$-algebra $A$ and an $\Fun$-algebra $B$, the adjoint of an $R$-algebra homomorphism $f:
B\otimes_\Fun R \to A$ is the map $B\to M(A)$ sending $b$ to $x_{f(b)}$.

\begin{lemma}\label{lem:integral}
A ring $A$ is an integral domain if, and only if, the $\Fun$-algebra $M(A)$ is integral.
\end{lemma}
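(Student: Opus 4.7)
The plan is to unpack the definition of integrality for $\Fun$-algebras and observe that both conditions translate, essentially tautologically, into the definition of an integral domain.

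First I would note that the definition of integral $\Fun$-algebra has two parts: (i) the set of nonzero elements is closed under multiplication, i.e., there are no zero-divisors, and (ii) the resulting multiplicative monoid $A \setminus \{0\}$ embeds into its group completion. The first condition is by definition equivalent to $A$ having no zero-divisors as a ring, so the whole content of the lemma lies in condition (ii).

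For condition (ii), I would invoke the standard fact that a commutative monoid injects into its group completion if and only if it is cancellative. Thus it suffices to show that, assuming $A$ has no zero-divisors, the multiplicative monoid $A \setminus \{0\}$ is cancellative precisely when $A$ is an integral domain in the usual sense. But this is immediate: if $a, b, c \in A$ with $a \neq 0$ and $ab = ac$, then $a(b-c) = 0$ in the ring $A$, and since $A$ has no zero-divisors one concludes $b = c$. Conversely, the cancellativity of the nonzero multiplicative monoid is already implied by the no-zero-divisors condition, so in fact (ii) is automatic given (i) for any commutative ring.

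There is really no main obstacle here; the statement reduces to remembering that the multiplicative monoid of a domain is automatically cancellative (a standard consequence of distributivity), and that cancellativity is the precise criterion for injectivity of the group-completion map. The only small subtlety worth remarking is the degenerate case $A = 0$, which I would dispatch by noting the convention that an integral domain is nonzero, matching the implicit assumption that a nontrivial $\Fun$-algebra has $1 \neq 0$.
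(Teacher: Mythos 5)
Your proof is correct, and it is essentially the same argument as the paper's, with a small difference of emphasis. The paper dispatches the group-completion condition by simply exhibiting the group completion concretely as $M(\Frac(A))\setminus\{0\}$ (into which $A\setminus\{0\}$ visibly injects), whereas you verify the abstract criterion — that a commutative monoid embeds in its group completion iff it is cancellative — and then check cancellativity via the one-line distributivity argument $a(b-c)=0$. These are two phrasings of the same fact; yours has the minor advantage of not presupposing the existence of the fraction field, while the paper's is shorter given that $\Frac(A)$ is available. Your remark on the degenerate case $A=0$ is also fine (and, if anything, slightly more careful than the paper, which is silent on it): when $A=0$ the set of nonzero elements of $M(A)$ is empty and hence not a monoid, so $M(A)$ fails to be integral, matching the convention that the zero ring is not a domain.
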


\begin{proof}
If $A$ is an integral domain, then it has no zero-divisors, so the set of non-zero elements $A
\smallsetminus \{0\}$ is a cancellative monoid (a non-cancellative relation $xy=xz$ would imply the
existence of a zero divisor: $x(y-z)=0$).  Conversely, if $A \smallsetminus \{0\}$ is a cancellative
monoid then $A \smallsetminus \{0\} = M(A) \smallsetminus 0$ injects into its group completion,
which is $M(\Frac(A))\smallsetminus\{0\}$, so $A$ injects into its field of fractions.
\end{proof}

Given an $R$-algebra $A$, we will write $\widehat{A}$ for the $R$-algebra $M(A) \otimes_\Fun R$
(i.e., first apply scalar restriction and then scalar extension). Elements of $\widehat{A}$ are
finite formal $R$-linear combinations of elements $x_a$ for $a\in A \smallsetminus \{0\}$. Note that
$\widehat{A}$ is functorial in $A$; an $R$-algebra homomorphism $f: A \to B$ induces an $R$-algebra
homomorphism $\widehat{f}: \widehat{A} \to \widehat{B}$ sending $x_a \mapsto x_{f(a)}$.  The
adjunction \eqref{eq:adjunction} comes with a counit natural transformation $\widehat{A} \to A$
which admits an explicit description.    The counit is a surjective $R$-algebra homomorphism  $ev$
defined by
\[ 
ev: x_a \mapsto a.
\]  
We call this map the \emph{evaluation} because it evaluates a
formal $R$-linear combination of elements of $A$ to an element of $A$ using the arithmetic
operations of $A$:
\[
ev \left( \sum_i \lambda_i x_{a_i} \right) = \sum_i \lambda_i a_i,
\]
for $\lambda_i \in R$ and $a_i \in A$.  

The kernel of $ev$ is
\[
\ker(ev) = \left\{ \sum_i \lambda_i x_{a_i}~ \Biggr\rvert ~\sum_i \lambda_ia_i = 0 \right\},
\] 
and it admits a much smaller presentation that will be useful later on.

\begin{proposition}\label{kernel-of-ev}
Given an $R$-algebra $A$, the kernel of $ev: \widehat{A} \to A$ is generated as a $\Z$-module by the
following elements:
\begin{enumerate}
\item $\lambda x_a - x_{\lambda a}$, for $a\in A$ and $\lambda \in R$;
\item $x_a + x_b + x_c$, for $a,b,c\in A$ with $a+b+c=0$.
\end{enumerate}

\end{proposition}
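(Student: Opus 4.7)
The plan is to define $K \subseteq \widehat{A}$ to be the $\Z$-submodule generated by the elements of types (1) and (2), and then to prove $K = \ker(ev)$. The inclusion $K \subseteq \ker(ev)$ is immediate: evaluating the type (1) generator gives $\lambda a - \lambda a = 0$, and evaluating the type (2) generator gives $a + b + c = 0$ by hypothesis.

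For the reverse inclusion, given $u = \sum_i \lambda_i x_{a_i} \in \ker(ev)$, I would first use the type (1) relations to pull each scalar into its subscript, obtaining $u \equiv \sum_i x_{\lambda_i a_i} \pmod{K}$; any summand with $\lambda_i a_i = 0$ drops out because $x_0 = 0$ in $\widehat{A}$. This reduces the problem to showing that whenever $b_1, \ldots, b_n \in A$ satisfy $\sum_i b_i = 0$, the sum $\sum_i x_{b_i}$ lies in $K$.

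To handle that reduced statement I would first extract two consequences of relation (2). Taking the triple $(a, -a, 0)$ shows $x_a + x_{-a} \in K$ for every $a \in A$, and then for $a, b \in A$ with $a + b \neq 0$, applying (2) to $(a, b, -(a+b))$ and combining with the previous fact yields $x_a + x_b \equiv x_{a+b} \pmod{K}$. An induction on $n$ then finishes: for $n = 1$ the only possibility is $b_1 = 0$; for $n \geq 2$, if $b_1 + b_2 = 0$ the first two terms already lie in $K$ and the remaining terms still sum to zero, while if $b_1 + b_2 \neq 0$ one merges $x_{b_1} + x_{b_2}$ into $x_{b_1 + b_2}$ modulo $K$ and applies the inductive hypothesis to a list of length $n - 1$. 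The main (and only) obstacle is that naively merging two terms whose sum happens to vanish would leave a bookkeeping gap; this is precisely what the auxiliary identity $x_a + x_{-a} \in K$ — which relies on relation (2) being allowed with $c = 0$ — is designed to patch.
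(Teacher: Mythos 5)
Your proof is correct and follows essentially the same route as the paper's: both first absorb the scalars via the type~(1) relations, then iteratively merge two terms into one using a type~(2) element together with the auxiliary identity $x_c + x_{-c} \in K$ (itself a type~(2) element with third entry $0$). The paper does the merge in a single pass without case-splitting on whether $b_1+b_2=0$, while you handle that case separately, but the underlying mechanism is identical.
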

\begin{proof}
Given an expression $E = \sum^n_{i=1} \lambda_i x_{a_i}$ in the kernel of $ev$, we will reduce it to zero by
adding an appropriate sequence of elements of the two types above.  Adding $x_{\lambda_i a_i} -
\lambda_i x_{a_i}$ to $E$, for each $i$, reduces it to the expression 
\[
E'= \sum^n_{i=1} x_{\lambda_i a_i}.
\]
This is in the kernel of $ev$ if and only if $\sum_i \lambda_i a_i = 0$.  Subtracting first
$x_{\lambda_1 a_1} + x_{\lambda_2 a_2} + x_{-\lambda_1 a_1 - \lambda_2 a_2}$ and then
$-x_{-\lambda_1 a_1 - \lambda_2 a_2} - x_{\lambda_1 a_1 + \lambda_2 a_2}$ from $E'$ yields an
expression of the same form but with one fewer terms.  Repeating this inductively eventually yields
$x_{\sum_i \lambda_i a_i}$; the subscript is zero by the hypothesis that the original expression $E$
was in the kernel of $ev$, and $x_a = 0$ in $\widehat{A}$ if and only if $a=0$.
\end{proof}

\subsection{The universal embedding in the affine case}\label{sec:affine-embedding}

We now describe how the evaluation map $ev: \widehat{A} \to A$ gives a universal embedding of $\spec
A$ into an ambient space with the structure required to tropicalize $\spec A$. 

\begin{definition}
An \emph{$\Fun$-framing} on an $R$-scheme $X$ is an $\Fun$-scheme $Y$ and a morphism of $R$-schemes $X \to
Y \times_\Fun \spec R$.  We say that a framing is affine if $X$ and $Y$ are both affine.  A morphism
of $\Fun$-framed schemes is a diagram
\begin{equation}\label{eq:framing-morphism-diagram}
\begin{diagram}
\node{X} \arrow{e} \arrow{s} \node{Y \times_\Fun \spec R} \arrow{s}\\
\node{X'} \arrow{e} \node{Y' \times_\Fun \spec R}
\end{diagram}
\end{equation}
where the right vertical arrow is induced by a morphism of $\Fun$-schemes  $Y \to Y'$.  An
\emph{$\Fun$-embedding} is an $\Fun$-framing that is also a closed immersion.
\end{definition}
In other words, the category of $\Fun$-framed schemes is simply the comma category of
$R$-schemes over $\Fun$-schemes.

For any affine $R$-scheme $X = \spec A$, the evaluation map defines an $\Fun$-embedding  $X
\hookrightarrow \widehat{X}$ where the $\Fun$-algebra of the ambient space is $M(A)$.  We call this
the \emph{universal embedding} of $X$ for the following reason.  As a purely formal consequence of the
adjunction \eqref{eq:adjunction}, this embedding is the initial object in the category
of framings on $X$, and hence it is initial among $\Fun$-embeddings.  More precisely,

\begin{proposition}\label{prop:initial-embedding}
Given an affine framed $R$-scheme $\alpha: X \to \spec B \otimes_\Fun R$, there is a unique
factorization through the universal embedding
\[
  X \to \widehat{X} \stackrel{\eta_\alpha} \to \spec B \otimes_\Fun R.
\]
Moreover, this factorization is functorial in the sense that a morphism of affine framed schemes as
\eqref{eq:framing-morphism-diagram} induces a diagram
\[
\begin{diagram}
\node{X} \arrow{s} \arrow{e} \node{\widehat{X}} \arrow{e} \arrow{s} \node{Y \times_\Fun \spec R} \arrow{s}\\
\node{X'} \arrow{e} \node{\widehat{X'}} \arrow{e} \node{Y' \times_\Fun \spec R.}
\end{diagram}
\]
\end{proposition}
\begin{proof}
Suppose $X=\spec A$, so the framing $\alpha$ corresponds to an $R$-algebra homomorphism 
\[
B \otimes_\Fun R \to A,
\]
 which is adjoint to an $\Fun$-algebra homomorphism
\begin{equation}\label{eq:fun-morphism-for-univ-property}
B \to M(A).
\end{equation}  
The evaluation map $\widehat{A} \to A$ is adjoint to the identity on $M(A)$.  The desired map
$\eta_\alpha: \widehat{X} \to \spec B\otimes_\Fun R$ corresponds to a homomorphism
\[
  B\otimes_\Fun R \to M(A)\otimes_\Fun R
\]
that is the functor $-\otimes_\Fun R$ applied to a morphism $B \to M(A)$, but the only such morphism
that gives the desired factorization is \eqref{eq:fun-morphism-for-univ-property}.  This shows
existence and uniqueness of the factorization $\eta_\alpha$.  The functoriality claim follows from a
similar purely formal manipulation of the adjunction.
\end{proof}

\subsection{The difficulty in globalizing the universal embedding}\label{sec:globadjoint}

We would like to construct an extension of the universal embedding functor from affine $R$-schemes
to arbitrary schemes over $\spec R$, and we will do so in the next section.  However, this
is not entirely straightforward.  The root of the difficulty is that $X \mapsto \widehat{X}$ does
not in general send open covers to open covers, as we now explain. 

\begin{remark}
In the earlier version of this paper, it was incorrectly claimed that the adjunction between scalar
restriction and scalar extension for affine schemes directly extends to an adjunction for arbitrary
schemes. Although open immersions are sent to open immersions, since the adjunction does not
preserve covers, existence of the extension is not immediate.  We thank Oliver Lorscheid for
pointing this out.
\end{remark}

Recall that the category of affine $\Fun$-schemes is the opposite of the category of
$\Fun$-algebras, basic open immersions correspond to localizations, and general $\Fun$-schemes can be
described by gluing affine patches together along open immersions.  See \cite[\S3]{GG1} or
\cite{Toen-Vaquie} for further details.  The category of schemes over a semiring can also be
constructed in essentially the same way.

The scalar restriction and extension functors $ R\text{-alg} \leftrightarrows \Fun\text{-alg}$
commute with localizations (see, for example, \cite[Paragraph 6.1.13]{Durov}), so they send open
immersions to open immersions. However, the scalar restriction functor $M$ does not send open covers
to open covers. The difficulty here is that an affine $\Fun$-scheme has very few Zariski open
covers. 

\begin{proposition}
Given an affine $\Fun$-scheme $Y$, a collection of affine open subschemes $\{U_i\}$ is a Zariski
cover if and only if $U_i=Y$ for some $i$.
\end{proposition}
\begin{proof}
Suppose $Y=\spec B$ for an $\Fun$-algebra $B$.  A collection of principal open subschemes $U_i = \spec
B[a_i^{-1}]$ covers $\spec B$ if and only if the $a_i$ generate the unit ideal in $B$.  The ideal
generated by the $a_i$ is simply the set of all elements that are a multiple of one of the $a_i$;
i.e., it is the multiplicative monoid generated by the $a_i$.  This is the unit ideal if and only if
at least one of the $a_i$ is a unit.  
\end{proof}

\begin{example}
Let $A = R[z]$ and consider the principal open subschemes of $X=\spec A$ given by $U=\spec
R[z,z^{-1}]$ and $V=\spec R[z, (1-z)^{-1}]$.  The pair $\{U,V\}$ is an open cover since $z$ and
$1-z$ generate the unit ideal in the ring $A$.  The scalar restriction functor $M$ sends $U$ and $V$
to principal open subschemes of the affine $\Fun$-scheme $\spec M(A)$. The ideal $(x_{z},
x_{1-z})\subset M(A)$  consists of all elements that can be written in either the form $x_a x_z$ or
$x_a x_{1-z}$ for some $a\in A$.  Clearly this is not the unit ideal of $M(A)$ since $1 \neq az$ or
$a(1-z)$ for any $a\in A$.  On $\widehat{X} = \spec M(A) \otimes_\Fun R$, we have open subschemes
$\widehat{U}$ and $\widehat{V}$ defined by the ideals $(x_{z})$ and $(x_{1-z})$ respectively, but
these ideals do not generate the unit ideal in the $R$-algebra $M(A) \otimes_\Fun R$ so these open
subschemes don't cover $\widehat{X}$.
\end{example}

Thus if $X$ is an affine $R$-scheme with an affine open cover $\{U_i\}$, the schemes $\widehat{U_i}$
will be affine open subschemes of $\widehat{X}$, but they will not in general form an open cover.
The problem, then, is that if $X$ is not necessarily affine we cannot build $\widehat{X}$ simply by
choosing an affine open cover and gluing the universal embeddings of the affine patches in this
cover. Different choices of cover will result in different glued objects.  In the next subsection,
we will see that the way around this obstacle is to use \emph{all} open affine subschemes of $X$.

\subsection{The universal embedding in the non-affine case}

We now extend the universal embedding to arbitrary not necessarily affine schemes. The idea is that
a scheme $X$ can be written as the colimit of \emph{all} its affine open subschemes $U$, and so we
define $\widehat{X}$ to be the colimit of the corresponding diagram of affine schemes $\widehat{U}$
and open immersions.  This colimit exists (one can construct it as a locally ringed space and then
easily check that it is a scheme). 

\begin{proposition}\label{prop:globalfactor}
The universal embeddings $U \to \widehat{U}$ for affine open subschemes of $X$ glue together to give
a embedding $X \to \widehat{X}$, and on affine schemes this restricts to the previously constructed
affine universal embedding. This embedding is initial among $\Fun$-framings of $X$.
\end{proposition}
\begin{proof}
For each affine open $U$, there is a canonical framing morphism $\alpha_U: U \to \widehat{U} \to \widehat{X}$,
and when $U \subset V$, the functoriality of Prop. \ref{prop:initial-embedding} implies that
$\alpha_V|_U = \alpha_U$, so these morphisms glue together to give a closed immersion $X \to
\widehat{X}$.  

To show that $X \to \widehat{X}$ is initial, suppose $Y$ is an $\Fun$-scheme and $\alpha: X \to
Y\times_\Fun \spec R$ is a morphism.  The target is covered by affine patches of the form $\spec
B\otimes_\Fun R$, and for any affine open $U \subset X$ mapping into $\spec
B\otimes_\Fun R$, there is a unique factorization 
\[
U \to \widehat{U} \to \spec B\otimes_\Fun R.
\]
Once again, by the functoriality of these affine factorization, these glue together to produce the
desired factorization $X \to \widehat{X} \to Y\times_\Fun \spec R$.  Uniqueness of this
factorization follows from the fact that it is locally unique.
\end{proof}

The gluing construction above can also be described in category theoretic terms as a left Kan
extension from affine schemes to schemes. First, let us recall the concept of left Kan extension.
Consider a pair of functors
\[
\begin{diagram}
\node{\mathcal{A}} \arrow{e,t}{F} \arrow{s,l}{G} \node{\mathcal{B}} \\
\node{\mathcal{C}} \arrow{ne,..}
\end{diagram}
\]
and the question of filling in the diagonal arrow.  There may or may not exist a functor $H:
\mathcal{C} \to \mathcal{B}$ making the diagram commute in the sense that there is a natural
isomorphism $F \cong H \circ G$.  A weaker condition would be to ask for an $H$ and a natural
transformation (that is not necessarily invertible) $\omega_H: F
\Rightarrow H \circ G$, but now there is a category of such pairs $(H,\omega_H)$.  A morphism
$(H_1, \omega_1) \to (H_2, \omega_2)$ is a natural transformation $H_1 \Rightarrow H_2$ that sends
$\omega_1$ to $\omega_2$.  The \emph{left Kan extension of $F$ along $G$} is an initial object in
this category; i.e., it is a functor $\Lan_G F: \mathcal{C} \to \mathcal{B}$ together with a natural
transformation $\omega: F \Rightarrow \Lan_G F \circ G$ such that for any other pair $(H,\omega_H)$
there is a unique natural transformation $\Lan_G F \Rightarrow H$ sending $\omega$ to $\omega_H$.
While it may not always exist, when $\mathcal{B}$ admits enough colimits, the left Kan extension can
be constructed via a straightforward recipe:
\[
  \Lan_G F (c) = \colim_{a \in G / c} F(a)
\]
where $G/c$ is the comma category of objects over $a\in \mathcal{A}$ over $c$, i.e., pairs $(a\in
\mathcal{A}, f: G(a) \to c)$.

Returning to the case at hand, let $\mathcal{O}$ denote the category of $R$-schemes and open
immersions, and let $\mathcal{O}_{\mathit{Aff}} \subset \mathcal{O}$ denote the full subcategory of
affine schemes. The affine universal embedding $X \mapsto
\widehat{X}$ constructed so far in section \ref{sec:affine-embedding} is a functor defined on affine
schemes and \emph{all} morphisms, but let us now restrict to open immersions and regard it as a functor
\[
  E: \mathcal{O}_\mathit{Aff} \to \mathrm{Sch}/R.
\]
Consider the left Kan extension of $E$ along the inclusion $j$:
\[
\begin{diagram}
\node{\mathcal{O}_\mathit{Aff}} \arrow{e,t}{E} \arrow{s,l}{j} \node{\mathrm{Sch}/R} \\
\node{\mathcal{O}} \arrow{ne,b}{\Lan_j E}
\end{diagram}
\]
Somewhat more concretely, for an $R$-scheme $X$, the object $\widehat{X} := \Lan_j E (X)$
can be described as the colimit of $\widehat{U}$ as $U$ runs over the poset
$\mathcal{O}_\mathit{Aff}(X)$ of all affine open subschemes of $X$ (this is the category of objects
of $\mathcal{O}_\aff$ over $X \in \mathcal{O}$).

\section{Tropicalizing the universal embedding}

We now study the tropicalization of an integral scheme $X$ with respect to the universal
embedding $X\hookrightarrow \widehat{X}$.

\subsection{A brief review of scheme-theoretic tropicalization}\label{sec:tropreview}

In \cite{GG1} we introduced a generalization and refinement of the Kajiwara-Payne set-theoretic
tropicalization of subvarieties of toric varieties over a rank-one valued field.  Here we review that
construction.

Let $\T$ denote the idempotent semiring $(\R\cup\{\infty\},\mathrm{min},+)$ with additive unit
$0_\T = \infty$ and multiplicative unit $1_\T=0$.  Let $k$ be a field equipped with a
valuation $\nu: k \to \T$ (by which we mean a multiplicative and subadditive map
preserving the multiplicative and additive unit, respectively).  Let $B$ be an integral
$\Fun$-algebra, and $I\subset B\otimes_\Fun k$ an ideal.  We can regard $I$ as a $k$-linear subspace
and tropicalize it with respect to $\nu$ to get a tropical linear space $\trop(I)\subset
B\otimes_\Fun \T$ (or $\trop^\nu(I)$ if we need to emphasize the valuation) which is, by definition,
the $\T$-linear span of the coefficient-wise valuations of the elements of $I$. The set $\trop(I)$
is automatically a $\T$-submodule, and moreover it turns out to be an ideal in $B\otimes_\Fun \T$
(this requires the assumption that $B$ is integral, see \cite[Proposition 6.1.1 and Remark
6.1.2]{GG1}); i.e., it is a \emph{tropial ideal} as studied by Maclagan and Rin\'con in
\cite{Maclagan-Rincon-1,Maclagan-Rincon-2,Maclagan-Rincon-3}. The congruence $\bend\trop(I)$ on
$B\otimes_\Fun \T$ is generated by the \emph{bend relations}
\[
\bend(f): \:\: f \sim f_{\widehat{b}}
\]
for $f\in \trop(I)$ and $b$ a monomial term in $f$, where $f_{\widehat{b}}$ denotes the result of
deleting $b$ from $f$.

Now let $X$ be a $k$-scheme, $Y$ a locally integral $\Fun$-scheme (which means $Y$ admits an open
affine cover by the spectra of integral $\Fun$-algebras), and $\varphi: X\hookrightarrow
Y\times_\Fun k$ a closed embedding corresponding to a quasi-coherent ideal sheaf $\mathscr{I}$ on $Y
\times_\Fun k$.  For each integral affine patch $U\subset Y$, $\mathscr{I}$ is in particular a
$k$-linear subspace of the space of regular functions on $U\times_\Fun k$, and so its
tropicalization $\trop(\mathscr{I}(U))$ is a tropical linear space in the space of regular functions
on $U\times_\Fun\T$. These tropical linear spaces assemble to form a quasi-coherent ideal sheaf
$\trop(\mathscr{I})$ on $Y\times_\Fun \T$.  Applying the bend relations $\bend(-)$ on each of the
above affine patches then yields a quasi-coherent congruence sheaf $\bend\trop(\mathscr{I})$ on
$Y\times_\Fun\T$; the tropicalization $\Trop_\varphi(X)$ (or $\Trop_\varphi^\nu(X)$ to emphasize the
valuation) of $X$ with respect to the embedding $\varphi$ is then, by definition, the closed
subscheme determined by this congruence sheaf, regarded as a scheme over $\spec \T$.

A quintessential example of an $\Fun$-scheme is a toric variety $Y_\Delta$, where the fan $\Delta$
provides the model over $\Fun$.  Set-theoretic tropicalization, as defined by Payne and Kajiwara
\cite{Payne, Kajiwara}, applies to subvarieties of toric varieties $\varphi : X \hookrightarrow
Y_\Delta$, and \cite[Theorem 6.3.1]{GG1} shows that the output of that coincides with the
$\T$-points of the scheme-theoretic tropicalization $\Trop_\varphi(X)$.

\begin{remark}
The idea of generalizing the ambient spaces for tropicalization from toric varieties to
$\Fun$-schemes first appeared in \cite{Popescu-Pampu-Stepanov}, although they worked only with
set-theoretic tropicalization.
\end{remark}

Functoriality of tropicalization \cite[Proposition 6.4.1]{GG1} is a scheme-theoretic enrichment of
Payne's observation \cite[Corollary 2.6]{Payne-fibers} regarding torus equivariant morphisms of
toric varieties.  Namely, if $\psi : Y_1 \rightarrow Y_2$ is a map of integral $\Fun$-schemes and
$\varphi_i : X \hookrightarrow Y_i\times_\Fun k$ are closed embeddings forming a commutative
triangle
\[
\begin{diagram}
\node[2]{X} \arrow{sw,t}{\varphi_1} \arrow{se,t}{\varphi_2} \\
\node{Y_1 \times_\Fun k} \arrow[2]{e,b}{\psi\times_\Fun k} \node[2]{Y_2 \times_\Fun k,}
\end{diagram}
\]
then there is an induced morphism of tropicalizations $\Trop_{\varphi_1}(X) \rightarrow
\Trop_{\varphi_2}(X)$.

In addition to passing from toric varieties to arbitrary integral $\Fun$-schemes as ambient spaces for
tropicalization, in \cite{GG1} we observed that the domain of scheme-theoretic tropicalization, with
its functoriality property, naturally admits the following enlargement:
\begin{enumerate}
\item The field $k$ can be replaced by an arbitrary ring $R$.
\item The tropical numbers $\T$ can be replaced by an arbitrary idempotent semiring $S$.
\end{enumerate}
Item (2) requires a generalization of the definition of (semi)valuation that we discuss in the next
section. This generalization includes the case of higher-rank Krull (semi)valuations $k \rightarrow
\Gamma \cup \{-\infty\}$ simply by giving the totally ordered abelian group $\Gamma$ the structure
of a semiring with $-\infty$ as the additive identity, where multiplication is the group operation
in $\Gamma$ and addition is the maximum with respect to the ordering.  Set-theoretic
tropicalizations with respect to higher rank Krull valuations were studied in \cite{Banerjee}.
Higher rank tropicalization has been further studied in \cite{Foster-Ranganathan}, where an
associated notion of higher rank `Hahn' analytification is introduced.

The output of $\Trop$ is then canonically a scheme over $\spec S$. Explicitly, when $Y = \spec B
\otimes_\Fun R$ and $X$ is defined by an ideal $I$, we have
\[
  \Trop(X) = \spec B\otimes_\Fun S / \bend \trop(I) 
\]
where $\trop(I) \subset B\otimes_\Fun S$ is the ideal generated by applying a (generalized)
semivaluation $R\to S$ coefficient-wise to the elements of $I$.

\subsection{Semivaluations}
Here we review the definition and properties of the generalized class of semivaluations that were first
introduced in \cite{GG1}.

\begin{definition}
Given a ring $R$ and an idempotent semiring $S$, a semivaluation $\nu: R \to S$ is a map such that
\begin{enumerate}
\item $\nu(0)=0_S$;
\item $\nu(1)=\nu(-1)=1_S$;
\item (multiplicative): $\nu(ab)=\nu(a)\nu(b)$;
\item (subadditive): $\nu(a+b) +  \nu(a) + \nu(b) = \nu(a)+\nu(b)$.
\end{enumerate}
\end{definition}
Note that conditions (2) and (3) allow us to rewrite the subadditivity condition (4) in a more symmetric form as 
\[
  \nu(a) + \nu(b) + \nu(c) = \nu(a) + \nu(b) = \nu(a) + \nu(c) = \nu(b) + \nu(c)
\] 
for any $a,b,c \in R$ satisfying $a+b+c=0$.   More generally, it follows that if $a_1 + a_2 + \cdots
+ a_n = 0$ in $R$, then the value of the summation $\nu(a_1) + \cdots + \nu(a_n)$ in $S$ is unchanged if any
one term is omitted; we say that this summation \emph{tropically vanishes}. Also observe that if $R
\to S$ is a semivaluation in this sense, then pre-composition with a ring homomorphism $R' \to R$ and
post-composition with a semiring homomorphism $S \to S'$ both yield semivaluations.

Note also that when $S=\mathbb{T}$, the above definite reduces to the usual definition of a real (rank 1) semivaluation. 

Let $\val(R,S)$ denote the set of semivaluations from $R$ to $S$.  Fixing $S$, we can regard $\val(-,S)$
as a covariant functor $\mathcal{O}_\mathit{aff} \to \mathrm{Sets}$.  We can then take the left Kan
extension along the inclusion $\mathcal{O}_\mathit{aff} \hookrightarrow \mathcal{O}$ to extend to
arbitrary schemes.   Concretely, a point of $\val(X,S)$ is represented by an affine open
subscheme $U = \spec A$ of $X$ and a semivaluation $\nu: A\to S$, and two pairs $(U_1,\nu_1)$ and $(U_2,
\nu_2)$ are equivalent if there exists $(U_3, \nu_3)$ with $U_3 \subset U_1 \cap U_2$ such that
$\nu_3$ maps to $\nu_1$ and $\nu_2$. We refer to such an object as a semivaluation on $X$ with values in
$S$.  This mildly extends the definition of \cite[\S3.1]{Temkin} to allow semivaluations taking valued
in more general semirings instead of just $\mathbb{T}$.

If $\nu: R\to S$ is a semivaluation, $A$ is an $R$-algebra, and $T$ is an $S$-algebra, then we say that a semivaluation
$w: A\to T$ is \emph{compatible} with $\nu$ if the diagram
\[
\begin{diagram}
\node{R} \arrow{s} \arrow{e} \node{S} \arrow{s} \\
\node{A} \arrow{e} \node{T}
\end{diagram}
\]
commutes.   We write $\val_\nu(A,T)$ for the set of semivaluations on $A$ taking values in $T$ and
compatible with $\nu$; it is a functor of $R$-algebras in the first variable, and of $S$-algebras in
the second variable.  As in the paragraph above, left Kan extension along the inclusion
$\mathcal{O}_\mathit{aff} \hookrightarrow \mathcal{O}$ extends this to a functor on arbitrary
schemes.

When the semivaluation on $R$ takes values in $\mathbb{T}$ and $X$ is an $R$-scheme, the
subset $\val_\nu(X,\mathbb{T})$ of semivaluations compatible with $\nu$ is precisely the underlying set of the
Berkovich analytification $X^{\an}$.

\subsection{The points of a tropicalization}
Let $Y$ be an $\Fun$-scheme, and let $R$ be a ring with a semivaluation $\nu: R\to S$.  Given an
$R$-algebra $A$, an $S$-algebra $T$, and a semivaluation $w: A \to T$ compatible with $\nu$, there
is a \emph{tropicalization-of-points} map
\[
  \trop: (Y\times_\Fun \spec R) (A) \to (Y \times_\Fun \spec S) (T).
\]
This map is defined as follows.  Locally $Y\times_\Fun \spec R)$ is of the form $\spec B
\otimes_\Fun R$ for an $\Fun$-algebra $B$, and an $A$-valued point is adjoint to a homomorphism $B
\to M(A)$.  Since $w$ is multiplicative, the composition \[B \to M(A) \stackrel{w}{\to} M(T)\] is an $\Fun$-algebra
homomorphism, and this is adjoint to a $S$-algebra homomorphism $B\otimes_\Fun S \to T$.

Let $X$ be a locally integral scheme over $R$ with semivaluation $\nu: R \to S$, and $\varphi: X
\hookrightarrow Y \times_\Fun \spec R$ an $\Fun$-embedding.  These data determine a tropicalization
$\Trop_\varphi(X)$, which is a scheme over the idempotent semiring $S$.  Locally, $X$ is cut out by
an ideal $I \subset B\otimes_\Fun R$, and
\[
  \Trop_\varphi(X) = \spec B\otimes_\Fun S / \bend \trop(I) 
\]
where $\trop(I) \subset B\otimes_\Fun S$ is the ideal generated by applying $\nu$ coefficient-wise
to the elements of $I$.  Note that $X(A) \subset (Y\times_\Fun R) (A)$ and $\Trop_\varphi(X)(T)
\subset (Y\times_\Fun S) (T)$.

\begin{proposition}
The tropicalization-of-points map sends $X(A)$ into $\Trop_\varphi(X)(T)$.
\end{proposition}
\begin{proof}
It suffices to verify this on an affine patch $\spec B\otimes_\Fun R \subset Y\times_\Fun R$. If $X$
is defined by an ideal $I$, then an $A$-valued point $p$ corresponds to an $R$-algebra morphism $p:
B\otimes_\Fun R \to A$ such that $f(p) = 0$ for all $f \in I$.  Thus $w(f(p)) = 0_T$, and then it
follows from the symmetric form of the subadditivity condition that if $f_i$ are the monomial terms
of $f$, then the sum $\sum_i w(f_i(p))$ is unchanged if any single term is omitted.  It follows that
$p$ tropicalizes to a homomorphism  $\trop(p): B\otimes_\Fun S \to T$ that descends to the quotient
by the congruence $\bend\trop(I)$, and hence $w\circ p$ determines a $T$-valued point of
$Trop_\varphi(X)$.
\end{proof}

\begin{remark}
A $T$-valued point of $\Trop_\varphi(X)$ can be thought of as a semivaluation on $X$ that is, in an
affine patch on which it lives, defined only on the monomials coming from the embedding $\varphi$
and satisfying subadditivity only for linear combinations of monomials in the defining ideal of $X$.
Moreover, every such partially defined semivaluation comes from a $T$-point.  This bijection appears as
Theorem $D$ in \cite{F1-vector-bundles}, and it follows directly from the definitions here.  This
fact provides an illuminating interpretation of our results on the relation between the universal
tropicalization or Berkovich analytification and finite tropicalizations:  a semivaluation on a scheme
$X$ restricts to a partially defined semivaluation on the monomials from any $\Fun$-embedding of $X$,
and if the set of monomials is all elements of the coordinate algebra of an affine patch in $X$ then
a partially defined semivaluation is the same as a true semivaluation.
\end{remark}

\begin{remark}
The tropicalization-of-points map is a mild generaization of the Kajiwara-Payne tropicalization map
for toric varieties \cite{Kajiwara,Payne}.  There are numerous interesting further refinements and
generalization of tropicalization, such as the local tropicalization of Popescu-Pampu and Stepanov
\cite{Popescu-Pampu-Stepanov} and the logarithmic tropicalization studied by Ulirsch
\cite{Ulirsch1} and its extension to stacks \cite{Ulirsch2}. It would be interesting to combine our
construction with these.
\end{remark}

\subsection{Strong tropical bases}
Let $B$ be an integral $\Fun$-algebra, $\nu : R \rightarrow S$ a semivaluation from a ring to an
idempotent semiring, and $I\subset B\otimes_\Fun R$ an ideal.  In general, the tropicalized ideal
$\trop(I)\subset B\otimes_\Fun S$ is not finitely generated, and so the congruence $\bend\trop(I)$
is presented by an infinite set of generating relations.  There is often a large amount of
redundancy in this generating set, and so one can ask about the existence of smaller sets of
generating relations. Given a subset $K \subset I$, we can consider the set of tropical polynomials
$\{ \nu(f) \}_{f\in K}$ and then the congruence generated by their bend relations, $\langle \bend(\nu(f)) \rangle_{f\in K}$.

\begin{definition}
  A \emph{strong tropical basis} for an ideal $I$ is a generating subset $K\subset I$
   such that the
\[
  \bend\trop(I) = \langle \bend(\nu(f)) \rangle_{f\in K}.
\]
\end{definition}

\begin{remark}
  In \cite[Definition 8.2.1]{GG1} we proposed a different notion of tropical basis: a
  scheme-theoretic tropical basis is a collection $\{J_i\}$ of principal ideals that generates $I$
  as an ideal and such that the collection of congruences $\{\bend\trop(J_i)\}$ generates
  $\bend\trop(I)$.  If $\{f_i\}\subset I$ is a strong tropical basis then the collection of
  principal ideals $\{(f_i)\}$ is always a scheme-theoretic tropical basis.  However, the converse
  is not necessarily true: as shown in \cite[Example 8.1.1]{GG1}, for the polynomial
  $f=x^2+xy+y^2 \in R[x,y]$, the congruence $\bend(\nu(f))$ is strictly smaller than the congruence
  $\bend\trop(I)$ for $I=(f)$, and so in this case $\{(f)\}$ is trivially a scheme-theoretic
  tropical basis, while $\{f\}$ is \emph{not} a strong tropical basis. 
\end{remark}

The following result is the technical heart of this paper.  

\begin{proposition}\label{prop:strongtropbas}
The elements given in Proposition \ref{kernel-of-ev} are a strong tropical basis for the kernel of
$ev: \widehat{A} \to A$, for any semivaluation $\nu : R \rightarrow S$.
\end{proposition}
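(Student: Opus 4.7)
The plan is to mirror the algorithmic proof of Proposition \ref{kernel-of-ev}. The first condition for being a strong tropical basis—that the elements generate $\ker(ev)$ as an ideal—is immediate from Proposition \ref{kernel-of-ev}, since generation as a $\Z$-module certainly implies generation as an ideal. The substantive task is to prove the bend-congruence equality $\bend\trop(\ker(ev)) = J$, where $J$ denotes the congruence on $M(A)\otimes_\Fun S$ generated by the bend relations of the valuations of the two families of generators.

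The containment $J \subset \bend\trop(\ker(ev))$ is immediate. For the reverse, first observe that bend relations are compatible with $S$-linear combinations: if $f = s_1 f_1 + s_2 f_2$ and each $\bend(f_i)$ lies in a congruence $J'$, then so does $\bend(f)$, as one checks case-by-case depending on whether a chosen monomial of $f$ arises from $f_1$, from $f_2$, or from both, and one uses the corresponding bend of $f_1$ and/or $f_2$. Since by definition $\trop(\ker(ev))$ is the $S$-linear span of $\{\nu(g) : g \in \ker(ev)\}$, it therefore suffices to show $\bend(\nu(g)) \subset J$ for each individual $g \in \ker(ev)$.

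The key tool is a monomial renormalization: the two bend relations arising from the type (1) generator $\lambda x_a - x_{\lambda a}$ force $\nu(\lambda) x_a \equiv_J x_{\lambda a}$. Consequently, for $g = \sum \lambda_i x_{a_i} \in \ker(ev)$, the element $\nu(g) = \sum \nu(\lambda_i) x_{a_i}$ is termwise $J$-equivalent to $P := \sum x_{\lambda_i a_i}$, whose subscripts sum to zero by the kernel hypothesis. The problem then reduces to showing that for every $P = \sum_{k=1}^n x_{e_k}$ with $\sum e_k = 0$ in $A$, every bend relation of $P$ lies in $J$. I would prove this by induction on $n$. The cases $n \leq 3$ follow directly from the bends of the corresponding type (1) or type (2) generators. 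For the inductive step, form $\tilde P := x_{e_1+e_2} + \sum_{k\geq 3} x_{e_k}$, which has $n-1$ terms with subscripts still summing to zero; by the inductive hypothesis, all its bends lie in $J$. Combining the type (2) bend of $x_{e_1}+x_{e_2}+x_{-e_1-e_2}$ (which gives $x_{e_1}+x_{e_2} \equiv_J x_{e_2}+x_{-e_1-e_2}$) with the type (1) identification $x_{-e_1-e_2} \equiv_J x_{e_1+e_2}$ yields $P \equiv_J x_{e_2} + \tilde P$, and applying the induction to delete $x_{e_1+e_2}$ from $\tilde P$ produces $P \equiv_J x_{e_2} + \sum_{k\geq 3} x_{e_k} = P_{\widehat{x_{e_1}}}$; a symmetric choice of the distinguished pair recovers the bend of $P$ deleting any $x_{e_k}$.

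The main obstacle is sequencing the inductive step cleanly—threading the right chain of elementary $J$-equivalences so that the target bend relation of $P$ lines up, after renormalization and type (2) bridging, with a bend of $\tilde P$ supplied by induction. With that in hand, the desired bend $\nu(g) \equiv_J \nu(g)_{\widehat{b}}$ follows by transporting the verified bend of $P$ back through the termwise renormalization.
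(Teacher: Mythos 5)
Your proof is correct and follows the same overall strategy as the paper's: (i) reduce to checking $\bend(\nu(g)) \subseteq J$ for each individual $g \in \ker(ev)$; (ii) use the type (1) bend relations to renormalize coefficients, replacing $\nu(g)$ by $P = \sum_k x_{e_k}$ with $\sum_k e_k = 0$; and (iii) show that every bend relation of $P$ is a consequence of the type (2) bends together with the sign flip $x_{-a}\sim x_a$. The differences are organizational. For step (i), the paper invokes \cite[Lemma 5.1.3(2)]{GG1}, whereas you prove the requisite $S$-linearity of bend relations inline; this is correct and makes the argument self-contained. For step (iii), the paper argues directly in two stages: it first shows $P \sim P_{\widehat{e_1}} + x_{e_2+\cdots+e_{n-1}}$ from a single type (2) bend, and then shows by a chain of type (2) bends that $P_{\widehat{e_1}}$ is $J$-equivalent to a polynomial already containing the monomial $x_{e_2+\cdots+e_{n-1}}$, so the extra term is absorbed by idempotency of addition. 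Your version replaces this idempotency absorption with an induction on the number of terms, merging $x_{e_1}$ and $x_{e_2}$ into $x_{e_1+e_2}$ and invoking the inductive hypothesis on the shorter element $\tilde P$. The elementary moves (pairwise merging via a type (2) bend plus a type (1) sign flip) are identical, and both routes are valid; the inductive packaging is arguably tidier and avoids having to locate the right monomial to absorb. One small point you may wish to make explicit, which the paper also glides over: when two renormalized subscripts $\lambda_i a_i$ coincide, the renormalization is not term-by-term bijective, and the bend deleting one of the colliding terms does not quite transport from a bend of $P$. However in that degenerate case the bend is even easier — after renormalizing only the deleted term to $x_{\lambda_i a_i}$, the remaining polynomial already contains a term $J$-equivalent to it, so the deletion follows from idempotency without any appeal to the induction.
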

\begin{proof}
  By \cite[Lemma 5.1.3(2)]{GG1} (see also the first paragraph in the proof of \cite[Theorem
  1.1]{Maclagan-Rincon-1}), the congruence $\bend\trop(\ker(ev))$ is spanned as an $S$-module by the
  congruences $\bend(\nu(f))$ for $f\in \ker(ev)$.  Thus it suffices to show that each such
  congruence $\bend(\nu(f))$ is contained in the congruence generated by the bend relations of the
  semivaluations of the elements of types (1) and (2).

  Consider an element $f\in \ker(ev)$; the coefficient-wise semivaluation of $f$ is of the form
  \[\nu(f)=\sum_i \nu(\lambda_i) x_{a_i},\text{ with }\sum_i \lambda_i a_i = 0.\]  The bend relations of the
  semivaluations of generators of type (1) give the relations
  \begin{equation}\label{eq:type1}
    x_{\lambda a} \sim \nu(\lambda)x_a, 
  \end{equation}
  and in particular, $x_{-a} \sim x_a$ since $\nu(-1)=\nu(1) = 1$.  Using these relations we see
  that $\nu(f)$ is equivalent to an element $g\in M(A)\otimes_\Fun S$ of the form
  \[
  g = \sum^n_{i=1} x_{b_i}, \text{ with }\sum b_i = 0
  \]
  (here $b_i := \lambda_i a_i$ to keep the notation simpler).  We now show that the congruence
  $\bend(g)$ is contained in the congruence
  \[
  J :=\langle\bend(x_a + x_b + x_{-a-b})\rangle_{a,b\in A}.
  \]
  The relation $x_{b_1} + x_{b_n} \sim x_{b_2 + \cdots + b_{n-1}} + x_{b_n}$ from
  $\bend(x_{b_1} + x_{b_2 + \cdots + b_{n-1}} + x_{b_n})$ gives the relation
  \begin{align}\label{eq:g-equiv1}
    g  &= x_{b_1} + x_{b_n} + \sum_{i=2}^{n-1} x_{b_i} \\
    & \sim x_{b_2 + \cdots + b_{n-1}} + x_{b_n} +  \sum_{i=2}^{n-1} x_{b_i} \nonumber \\
    & = g_{\widehat{b_1}} + x_{b_2 + \cdots + b_{n-1}} \nonumber
  \end{align}
  in $J$.  Next, consider the relation
  \[
  x_{b_2 + \cdots + b_\ell} + x_{b_{\ell+1}} \sim x_{b_2 + \cdots + b_{\ell+1}} + x_{b_{\ell+1}}
  \]
  from $\bend(x_{b_2 + \cdots + b_\ell} + x_{b_{\ell+1}} + x_{-b_2 - \cdots - b_{\ell+1}})$ and $x_c
  \sim x_{-c}$; using this repeatedly as $\ell$ runs from 2 up to $n-2$ gives
  \begin{align*}\label{eq:g-equiv2}
    g_{\widehat{b_1}} & = x_{b_2} + x_{b_3} + \cdots + x_{b_n} \\
    & \sim x_{b_2 + b_3} + x_{b_3} + \cdots + x_{b_n} \\
    &\quad\vdots \\
    & \sim x_{b_2 + \cdots + b_{n-1}} + x_{b_3} + \cdots + x_{b_n}
  \end{align*}
  in $J$, and hence the idempotency of addition implies that the relation 
  \[
  g_{\widehat{b_1}}+ x_{b_2 + \cdots + b_{n-1}} \sim g_{\widehat{b_1}}
  \]
  is in $J$.  Combining this with \eqref{eq:g-equiv1} yields the desired relation $g\sim
  g_{\widehat{b_1}}$ in $J$.  Since the choice of ordering of the $b_i$ was arbitrary, this shows
  that the bend relations of $g$ are indeed all contained in $J$.  Using the bend relations of the
  type (1) elements once again, but this time in the reverse of the direction we used them when
  passing from $\nu(f)$ to $g$, we have the relation $g_{\widehat{b_i}} \sim \nu(f)_{\widehat{a_i}}$
  in $J$.  Combined with the bend relations of $g$, this shows that the bend relations $\nu(f)$ are
  entirely contained in $J$.
\end{proof}

\subsection{The universal tropicalization}

Let $S$ be an idempotent semiring, $\nu: R\to S$ a semivaluation, and $X$ a scheme over $R$.
Note that the universal embedding $X\hookrightarrow \widehat{X}$ defined in
\S\ref{sec:globadjoint} yields a tropicalization when the $\Fun$-model $M(X)$ of $\widehat{X}$ is
locally integral.  When $X$ is irreducible, this is equivalent to $X$ being integral; indeed, this
can be checked on sufficiently small affine patches by \cite[Proposition 3.1.3]{GG1}, and it holds
there by Lemma \ref{lem:integral}.   We therefore assume in this section that $X$ is integral and we
will study its tropicalization in $\widehat{X}$.

\begin{definition}
  The \emph{universal tropicalization of $X$}, denoted $\Trop^\nu_{univ}(X)$, is the tropicalization
  of $X$ with respect to the canonical closed embedding $X\hookrightarrow \widehat{X}$.
\end{definition}

\begin{proposition}\label{prop:univ-property-of-univ-trop}
  Let $Y$ be an integral $\Fun$-scheme and $\varphi: X\hookrightarrow Y\times_\Fun R$ a closed
  embedding.  There is a canonical morphism of $S$-schemes $\Trop_{univ}(X) \to \Trop_{\varphi}(X)$,
  and it is natural in both $X$ and $(Y,\varphi)$.
\end{proposition}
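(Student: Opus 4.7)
The plan is to deduce the morphism directly from the universal property of the embedding $X \hookrightarrow \widehat{X}$ (Proposition \ref{prop:globalfactor}) combined with the functoriality of scheme-theoretic tropicalization \cite[Proposition 6.4.1]{GG1}. Given a closed embedding $\varphi : X \hookrightarrow Y\times_\Fun R$, the adjoint of $\varphi$ provides a unique $\Fun$-morphism $\psi : M(X) \to Y$ whose scalar extension $\psi \times_\Fun R : \widehat{X} \to Y \times_\Fun R$ makes a commutative triangle with $\varphi$ and the universal embedding. Since $X$ is integral we know from Lemma \ref{lem:integral} and the remark preceding the proposition that $M(X)$ is locally integral, so tropicalization is defined for the universal embedding; and $Y$ is integral by hypothesis, so $\Trop_\varphi(X)$ is likewise defined. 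Functoriality of $\Trop$ applied to the commutative triangle produces the desired morphism of $S$-schemes $\Trop_{univ}(X) \to \Trop_\varphi(X)$.

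For naturality in $(Y,\varphi)$, suppose $\theta : Y_1 \to Y_2$ is a morphism of integral $\Fun$-schemes and $\varphi_i : X \hookrightarrow Y_i \times_\Fun R$ are closed embeddings that form a commutative triangle with $\theta \times_\Fun R$. The second half of Proposition \ref{prop:globalfactor} gives a commutative diagram relating $\widehat{X}$ to both $Y_1 \times_\Fun R$ and $Y_2 \times_\Fun R$ via the adjoints $\psi_i : M(X) \to Y_i$, with $\theta \circ \psi_1 = \psi_2$. Applying functoriality of tropicalization to this enlarged diagram yields the commutative triangle of $S$-schemes
\[
\begin{diagram}
\node[2]{\Trop_{univ}(X)}\arrow{sw}\arrow{se}\\
\node{\Trop_{\varphi_1}(X)}\arrow[2]{e}\node[2]{\Trop_{\varphi_2}(X)}
\end{diagram}
\]
expressing naturality in $(Y,\varphi)$.

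For naturality in $X$, suppose $f : X' \to X$ is a morphism of integral $R$-schemes compatible with the embeddings, meaning $\varphi \circ f$ factors through a closed embedding $\varphi' : X' \hookrightarrow Y' \times_\Fun R$ (or, in the version relevant to a fixed ambient $(Y,\varphi)$, that $\varphi\circ f$ is itself a closed embedding with ambient space $Y$). Naturality of the unit of the base-change adjunction yields a commutative square relating $X' \hookrightarrow \widehat{X'}$ and $X \hookrightarrow \widehat{X}$ via $f$ and $M(f) \times_\Fun R$; combining this with the factorization of $\varphi \circ f$ through the universal embedding and invoking functoriality of $\Trop$ once more produces the desired commutative square of tropicalizations.

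The main point of the proof is thus essentially formal: the universal property of $X \hookrightarrow \widehat{X}$ transports every tropicalization of $X$ to a quotient of the universal one via a canonical $\Fun$-morphism, and functoriality of $\Trop$ converts the resulting diagrams of $\Fun$-schemes into diagrams of $S$-schemes. The only nontrivial content to verify carefully is that the hypotheses for invoking functoriality (namely that the ambient $\Fun$-schemes be locally integral) are satisfied at each step, but this has already been arranged by the standing integrality assumptions on $X$ and on each $Y$ considered.
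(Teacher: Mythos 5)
Your proof takes exactly the same route as the paper's: combine the universal property of $X\hookrightarrow\widehat{X}$ (Proposition~\ref{prop:globalfactor}) with functoriality of scheme-theoretic tropicalization to get the canonical morphism, and use the second half of Proposition~\ref{prop:globalfactor} for naturality in $(Y,\varphi)$. The paper's proof is a one-sentence citation of these two ingredients; yours spells out the details correctly, including the needed check that $M(X)$ is locally integral so the universal tropicalization is actually defined.

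One small caution about the naturality-in-$X$ paragraph: the functoriality statement you cite (\cite[Proposition~6.4.1]{GG1}) is formulated for a \emph{fixed} source scheme $X$ embedded in varying ambient $\Fun$-schemes, so it does not directly produce a map $\Trop_{univ}(X')\to\Trop_{univ}(X)$ when the source varies. What naturality in $X$ really rests on is the naturality of the unit of the base-change adjunction together with a separate (but routine) functoriality of the $\bend\trop$ construction under morphisms of embedded pairs, which you gesture at but do not isolate. Since the paper itself does not spell this out, this is a minor point rather than a gap, but if you wanted to be fully rigorous you would need to state precisely over which category of pairs $(X,\varphi)$ naturality is being asserted and check that $\bend\trop$ is a functor on it.
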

\begin{proof}
This follows immediately from the functoriality of tropicalization \cite[Proposition 6.4.1]{GG1} and
the universal property of the embedding $X\hookrightarrow \widehat{X}$ described in Proposition
\ref{prop:globalfactor}.
\end{proof}
This proves the first part of Theorem \ref{thm:UnivBerk-scheme-theoretic}, and it is because of the
above result that the tropicalization of $X$ in $\widehat{X}$ deserves to be called the universal
tropicalization of $X$.

\begin{theorem}\label{thm:moduli-of-semivaluations}
  The universal tropicalization
  $\Trop^\nu_{univ}(X)$ represents the contravariant functor on affine $S$-schemes sending $\spec T$
  to the set $\val_\nu(X,T)$ of semivaluations $X \rightarrow T$ compatible with $\nu$.
\end{theorem}

Thus the universal tropicalization is the algebraic moduli space of semivaluations on $X$.  When $R$ is
a field $k$, we obtain Theorem \ref{thm:intro-moduli-of-semivaluations} from the introduction.
Moreover, when $S$ is the tropical semiring $\T$ we obtain the set-theoretic bijection part of
Theorem \ref{thm:UnivBerk} directly from this by passing to the set of $\T$-points, since on the one
hand these are the points of the set-theoretic tropicalization, and on the other hand these are the
semivaluations $X \rightarrow \T$, i.e., the points of the Berkovich analytification; the part of the
theorem describing the Berkovich topology is explained and proven in \S\ref{sec:topology} below.

\begin{proof}[Proof of of Theorem \ref{thm:moduli-of-semivaluations}]
  It suffices to assume that $X=\spec A$ is affine.  In this case the universal tropicalization is
  \[
  \spec M(A)\otimes_\Fun S / \bend\trop(\ker (ev)).
  \]
  By Proposition \ref{prop:strongtropbas}, a $T$-point of this is a multiplicative map $\alpha: A
  \to T$ such that
  \begin{equation}\label{eq:val-compatible}
  \alpha(\lambda a) = \nu(\lambda)\alpha(a) \text{ for $\lambda\in R$ and $a\in A$,}
  \end{equation}
  and (using the fact that $\alpha(-c)=\nu(-1)\alpha(c) = \alpha(c)$ from the equation above),
  \begin{align}\label{eq:val-bend-rels}
    \alpha(a) + \alpha(b) + \alpha(a+b) & = \alpha(a) + \alpha(b)\\
    & = \alpha(a) + \alpha(a+b) \nonumber\\
    &= \alpha(b) + \alpha(a+b). \nonumber
  \end{align}
  The first condition \eqref{eq:val-compatible} says that $\alpha$ is compatible with the
  semivaluation on $k$.  In the second condition, \eqref{eq:val-bend-rels}, the first equality is
  precisely the subadditivity condition for a semivaluation. We now observe that the remaining two
  equalities are actually redundant and so impose no additional conditions.  We have $\alpha(a) +
  \alpha(a+b) = \alpha(a) + \alpha(-(a+b))$, and by the first equality of \eqref{eq:val-bend-rels}
  (applied with $a$ and $-(a+b)$ instead of $a$ and $b$), this is equal to \[\alpha(a) +
  \alpha(-(a+b)) + \alpha(a-(a+b))=\alpha(a) + \alpha(a+b) +
  \alpha(b).\]  Thus the second equality of \eqref{eq:val-bend-rels} follows from the first, and by
  symmetry between $a$ and $b$ the third one does as well.
\end{proof}

Note that if $X=\spec A$ is an affine $R$-scheme, then 
\[
\Trop^\nu_{univ}(\spec A) = \spec M(A)\otimes_\Fun S / \bend\trop(\ker ev)\] is an affine
$S$-scheme.  Let us abbreviate its algebra of global functions $S_A$.
\begin{corollary}\label{cor:universal-semivaluation}
There is semivaluation $w: A \to S_A$ that is universal among all semivaluations compatible with $\nu$ in the
following sense:  given any semivaluation $w': A\to T$ compatible with $\nu$, there is a unique homomorphism
of $S$-algebras $f: S_A \to  T$ such that $w'=f\circ w$.
\end{corollary}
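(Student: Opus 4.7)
The plan is to obtain this as a direct application of the Yoneda lemma to the representability statement of Theorem \ref{thm:moduli-of-valuations}. Let $F$ denote the covariant functor on $S$-algebras sending $T$ to the set of valuations $A \to T$ compatible with $\nu$; the theorem supplies a natural bijection
\[
\eta_T : F(T) \longrightarrow \Hom_{S\text{-alg}}(S_A, T)
\]
for every $S$-algebra $T$.

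First I would define the universal valuation $w : A \to S_A$ as the preimage of $\id_{S_A}$ under $\eta_{S_A}$; by construction $w$ lies in $F(S_A)$, so it is automatically compatible with $\nu$. Given an arbitrary valuation $w' : A \to T$ compatible with $\nu$, I would then set $f := \eta_T(w') \in \Hom_{S\text{-alg}}(S_A, T)$. The required factorization $w' = f \circ w$ is a routine naturality chase: applying the naturality square for $\eta$ associated to the morphism $f : S_A \to T$ to the element $w \in F(S_A)$ yields
\[
\eta_T(f \circ w) \;=\; f \circ \eta_{S_A}(w) \;=\; f \circ \id_{S_A} \;=\; f \;=\; \eta_T(w'),
\]
and the bijectivity of $\eta_T$ then gives $f \circ w = w'$. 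Uniqueness of $f$ is this same computation read in reverse: any $f'$ satisfying $f' \circ w = w'$ must also satisfy $\eta_T(w') = f'$, so $f' = f$.

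The only point to verify along the way is that the functor $F$ acts on semiring homomorphisms by post-composition, so that $F(f)(w) = f \circ w$; this is essentially built into the definition, since the composition of a valuation with a semiring homomorphism is again a valuation (as noted just above the statement of Theorem \ref{thm:moduli-of-valuations}) and compatibility with $\nu$ is manifestly preserved. There is therefore no genuine obstacle here --- the corollary is a purely formal extraction of the universal element from the representability of the moduli functor.
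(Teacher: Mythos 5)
Your proof is correct and is exactly the intended argument: the paper gives no separate proof of this corollary, treating it as the standard Yoneda-style extraction of a universal element from the representability in Theorem~\ref{thm:moduli-of-valuations}, and your explicit construction of $w$ as the preimage of $\id_{S_A}$ under the natural bijection recovers precisely the universal valuation $a \mapsto x_a$ that the paper records immediately after the corollary.
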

The universal semivaluation sends $a\in A$ to $x_a \in \spec
M(A)\otimes_\Fun S / \bend\trop(\ker ev)$.

\subsection{An aside on the relation between universal valuations and universal tropicalization}

The universal tropicalization discussed above is defined using a fixed semivaluation $\nu$ on the
base $R$ and the universal embedding of $X$, in contrast to the tropicalization
$\Trop_\varphi^{\nu^R_\mathit{{univ}}}(X)$ from \cite[\S6.5]{GG1} which is defined with a fixed
embedding and the universal semivaluation $\nu^R_{univ}$ on $R$.  The point of Theorem
\ref{thm:intro-moduli-of-semivaluations} (Theorem \ref{thm:moduli-of-semivaluations} and Corollary
\ref{cor:universal-semivaluation}) is that these two objects are closely related, as we now explain.

Given a ring $A$, one can consider the category of all semivaluations on $A$, and this category
contains an initial object given by the universal semivaluation $\nu^A_{\mathit{univ}}$.  If $A$ is
an $R$-algebra, then one can also consider the subcategory of all semivaluations compatible with a
given semivaluation $\nu$ on $R$, and this subcategory contains an initial object taking values in
the coordinate algebra of the universal tropicalization (i.e., tropicalization of the universal
embedding).

Note that every semivaluation on $A$ is compatible with the universal semivaluation
$\nu^R_\mathit{univ}$ on $R$.  Hence, if we consider the universal tropicalization of $\spec A$ with
respect to the universal semivaluation $\nu^R_{univ}$ on $R$, the coordinate algebra of the
resulting object will be the semiring in which the universal semivaluation on $A$ takes values.

In geometric terms, the space of all semivaluations on $R$ is (an enrichment of) the Berkovich
spectrum $\mathcal{M}(R)$, and the universal tropicalization of $\spec A$ with respect to the
universal semivaluation on $R$ can be viewed as the family of Berkovich analytifications of $\spec
A$ parametrized by $\mathcal{M}(R)$.

\subsection{The Berkovich topology as an algebraic topology}\label{sec:topology}
We now introduce the \emph{strong Zariski topology} on a semiring scheme, and in the case of a
universal tropicalization we show that this topology coincides with the Berkovich topology.

Let $X$ be a scheme over $S$, viewed as a topological space with a structure sheaf. The Zariski
topology on the underlying set of $X$ induces a Zariski topology on the $S$-points $X(S)$.
Explicitly, an open immersion $U \hookrightarrow X$ determines a subset $U(S)\subset X(S)$, and
every Zariski open subset is of this form.

\begin{definition}
Let $X$ be an $S$-scheme.  The \emph{strong Zariski topology} on $X(S)$ is the topology whose closed
subsets are of the form $Z(S)$ for $Z$ a closed subscheme of $X$.
\end{definition}

Note that since the pullback of a closed subscheme is again a closed subscheme, a morphism of
schemes $X\to X'$ induces a map $X(S) \to X'(S)$ that is continuous with respect to the strong Zariski topology.

\begin{proposition}
  If $S$ is a ring then the strong Zariski topology and the ordinary Zariski topology coincide, but
  in general the strong Zariski topology is finer than the Zariski topology.
\end{proposition}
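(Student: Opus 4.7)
The plan is to prove the two assertions separately, reducing wherever possible to the affine case and exploiting the fact that over a ring every closed subscheme is cut out by an ideal, whereas over a semiring closed subschemes correspond to the more general notion of a congruence on the structure sheaf.

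For the first part, suppose $S$ is a ring and $X$ is an $S$-scheme.  I would reduce to the affine situation $X = \spec A$, where a closed subscheme is of the form $V(I) = \spec A/I$ for some ordinary ideal $I \subseteq A$ and the open complement is $D(I) = X \setminus V(I)$.  The strong Zariski closed set is $V(I)(S) = \{\phi \co A \to S \mid \phi(I) = 0\}$, while the ordinary Zariski closed set is $X(S) \setminus D(I)(S)$, and one would verify these coincide by using the standard dictionary between ideals, vanishing loci, and open complements over a ring.  Both families are then closed under the same Boolean operations (arbitrary intersections and finite unions/intersections), so the affine identifications glue to give equal global topologies.

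For the second part, I would exhibit a counterexample when $S$ is a genuine semiring.  Take $S = \T$ and $X = \spec \T[t] = \A^1_\T$.  Open subschemes of $X$ correspond to localizations $\T[t, g^{-1}]$, and their $\T$-points are of the form $\{a \in \T \mid g(a) \in \T^\times\}$; since $\T^\times = \R$, these sets take only a very restricted list of values, and so the ordinary Zariski closed sets on $X(\T) = \T$ are limited to the empty set, the singleton $\{-\infty\}$, and all of $\T$.  However, closed subschemes of $X$ come from arbitrary congruences on $\T[t]$, and the congruence generated by $t + 1 \sim 1$ imposes the condition $\max(\phi(t), 1) = 1$, cutting out $Z(\T) = \{a \in \T \mid a \leq 1\}$.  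This half-line is a strong Zariski closed subset that is not ordinary Zariski closed, so the strong Zariski topology is strictly finer.

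The main obstacle will be in the first part: verifying precisely that over a general ring $S$ (not just a field) the set-theoretic complement $X(S) \setminus D(I)(S)$ coincides with $V(I)(S)$.  The naive comparison "$\phi(I)$ generates the unit ideal vs.\ $\phi(I) = 0$" is clean only for fields, so the cleanest route is likely to view the ordinary Zariski topology on $X(S)$ as being induced from the Zariski topology on $|X|$ through a canonical evaluation map, and then to note that both candidate topologies agree with this pulled-back topology whenever closed subschemes are governed by ideals rather than by more general congruences.
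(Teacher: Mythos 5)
Your opening paragraph contains the key observation the paper uses — that over a ring closed subschemes are cut out by ideals, whereas over a semiring they are cut out by congruences, which are strictly more general — but your detailed argument for the first part then abandons it in favor of a set-theoretic comparison that you yourself flag as problematic, and that is in fact the wrong road. The paper's proof is a one-liner that stays entirely on the side of equations: Zariski closed subsets of $X(S)$ are cut out by conditions of the form $f=0_S$ (ideals), strong Zariski closed subsets by conditions of the form $f=g$ (congruences), and over a ring the map $I \mapsto \{(f,g) : f-g \in I\}$ is a bijection between ideals and congruences, so the two collections of closed sets coincide. This simultaneously gives ``finer in general'' for free, since $f=0_S$ is a special case of $f=g$. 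Your approach instead tries to match $V(I)(S)$ against the set-theoretic complement $X(S)\setminus D(I)(S)$, and as you correctly observe, ``$\varphi(I)$ generates the unit ideal'' and ``$\varphi(I)=0$'' are not complementary conditions over a general ring $S$ (already over $S=\Z$ with $A=\Z[t]$, $I=(t)$, the complement of $D(t)(\Z)=\{\pm 1\}$ is $\Z\setminus\{\pm1\}$, which is not $V(J)(\Z)$ for any ideal $J$). So this route genuinely fails; the paper does not take it, and the honest ``main obstacle'' you identify is not an obstacle the paper's argument ever encounters.

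Two smaller points. First, your second part proves something slightly different from what the proposition asserts: it gives a counterexample to \emph{equality} over $\T$ (the congruence generated by $t+1\sim 1$ cutting out $\{a\le 1\}$ is a nice, correct example), but the proposition's claim that the strong Zariski topology is \emph{finer} requires showing that every Zariski closed set is strong Zariski closed, which you never argue; in the paper this again falls out of the ideal/congruence characterization, since an ideal is a congruence. The strictness you exhibit is really the content of the Remark that follows the proposition (where the paper uses examples over $\N$ instead of $\T$). Second, the gluing step you mention for passing from affine to global is not actually needed once one phrases things in terms of closed subschemes and congruence sheaves, since the strong Zariski and Zariski closed subsets are defined globally in the first place.
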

\begin{proof}
  The strong Zariski closed subsets are defined by equations of the form $f = g$, whereas Zariski closed
  subsets are defined by equations of the restricted form $f = 0_S$.  I.e., Zariski closed subsets
  are given by ideals, while strong Zariski closed subsets are given by congruences.  Over a ring
  congruences and ideas are in bijection, and so it follows that the strong Zariski topology and the Zariski
  topology on $X(S)$ coincide.
\end{proof}

\begin{remark}
  When $S$ is not a ring then the two topologies can be distinct.  For example, over $\N$ the
  diagonal in $\mathbb{A}^1\times \mathbb{A}^1$ is strong Zariski closed but not Zariski closed
  (cf. \cite[\S6.5.19]{Durov}), and for $\mathbb{A}^1(\N) = \mathbb{N}$ the strong Zariski topology is
  the finite complement topology, while the only nontrivial Zariski closed subset is the singleton
  $\{0\}$.
\end{remark}

Note that the structure sheaf $\mathscr{O}_X$ is a sheaf with respect to the Zariski topology, but
\emph{not} with respect to the strong Zariski topology.

The \emph{Euclidean topology} on $\T$ is the topology for which the exponential map gives a
homeomorphism with $\mathbb{R}_{\geq 0}$. More generally, if $\Lambda$ is a (possibly infinite) set then
the Euclidean topology on the product space $\T^\Lambda$ is given by the product topology with the
Euclidean topology on each factor.

\begin{lemma}\label{lem:infinite-product-topology}
  Consider the affine space $\mathbb{A}^\Lambda_\T = \spec \T[x_i \:\:|\:\: i\in \Lambda]$.  The
  strong Zariski topology on $\mathbb{A}^\Lambda(\T) = \T^\Lambda$ is exactly the Euclidean topology.
\end{lemma}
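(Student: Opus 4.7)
The plan is to prove the two topologies have the same closed sets by establishing two inclusions, with the bulk of the work being the reduction to closed subsets of $\T$ itself.

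For one direction, I would note that every tropical polynomial $f \in \T[x_i : i\in \Lambda]$ is Euclidean continuous on $\T^\Lambda$, since $\max$ and $+$ are continuous on $\T \cong \R_{\geq 0}$. Hence every basic strong Zariski closed set $\{(t_i) : f(t_i) = g(t_i)\}$ is Euclidean closed, and so is every arbitrary intersection and finite union of such sets. This shows the strong Zariski topology is coarser than the Euclidean topology.

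For the reverse inclusion I would first treat $\Lambda = \{\ast\}$, i.e., $\T$ itself. The key observation is that the relation $\max(x, a) \sim a$ in $\T[x]$ cuts out the half-line $\{t \in \T : t \leq a\}$, since $\max(t, a) = a$ precisely when $t \leq a$; symmetrically $\max(x, a) \sim x$ cuts out $\{t \geq a\}$. Intersecting these, every closed interval $[a,b] \subseteq \T$ is strong Zariski closed, and hence so is every finite union of closed intervals. Because $\T \cong \R_{\geq 0}$ is second countable, every Euclidean closed subset of $\T$ is a countable intersection of finite unions of closed intervals, and so is strong Zariski closed. For general $\Lambda$, I would reduce to this step using the projections $\pi_i : \mathbb{A}^\Lambda_\T \to \mathbb{A}^1_\T$, which are morphisms of $\T$-schemes and therefore induce strong Zariski continuous maps on $\T$-points, as noted just after the definition of the strong Zariski topology. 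Since the Euclidean topology on $\T^\Lambda$ is by definition the product topology, its closed sets are exactly the arbitrary intersections of finite unions of cylindrical sets $\pi_i^{-1}(C)$ with $C \subseteq \T$ Euclidean closed; by the $\T$-case each such $C$ is strong Zariski closed in $\T$, and continuity of $\pi_i$ pulls it back to a strong Zariski closed subset of $\T^\Lambda$. Since arbitrary intersections and finite unions preserve strong Zariski closedness, every Euclidean closed set in $\T^\Lambda$ is strong Zariski closed.

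The main obstacle I anticipate is the $\T$-case, specifically the recognition that semi-infinite intervals are cut out by the explicit congruences $\max(x, a) \sim a$ and $\max(x, a) \sim x$. Once this is in hand, the real-analysis decomposition of an arbitrary Euclidean closed subset of $\R_{\geq 0}$ into an intersection of finite unions of closed intervals is standard, and the projection/continuity argument for general $\Lambda$ is formal.
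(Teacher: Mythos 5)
Your proof is correct but takes a genuinely different route from the paper's in both directions.  For the inclusion that strong Zariski closed sets are Euclidean closed, the paper argues that the graph $\Gamma_f$ of a tropical polynomial is a finite-type polyhedron in $\T^{\supp f}\times\T$ crossed with the unused factors, hence Euclidean closed; your appeal to the Euclidean continuity of $\max$ and $+$ (so $\{f=g\}$ is the preimage of the diagonal) reaches the same conclusion more cleanly.  For the reverse inclusion, the paper exhibits a basis of Euclidean open boxes $\prod_i J_i$ and shows each coordinate open interval $J_i$ is strong Zariski open, using the very same principal congruences you find, namely $x+a\sim a$ cutting out $\{t\le a\}$ and $x+a\sim x$ cutting out $\{t\ge a\}$.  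You instead work with closed sets: decomposing an arbitrary Euclidean closed subset of $\T$ as a countable intersection of finite unions of closed intervals, then pulling back along the projections $\pi_i$.  This works, but note that it relies on strong Zariski closed sets being stable under \emph{finite unions}, which is not automatic over a semiring the way $V(I_1)\cup V(I_2)=V(I_1I_2)$ is for ideals over a ring --- congruences do not obviously admit a ``product'' realizing unions of loci.  The fact does hold (e.g. one can verify that $\{t\le a\}\cup\{t\ge b\}$ is cut out by the single congruence $(x+a)(x+b)\sim x^2+ab$, and more generally $V(f\sim g)\cup V(h\sim k)$ is principal), but in your write-up this deserves a remark, whereas the paper's basis-of-opens strategy sidesteps the issue by only ever intersecting open complements.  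You also don't really need second countability: arbitrary intersections of closed sets are closed, so the countability is harmless but superfluous.
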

\begin{proof}
  We first show that strong Zariski closed sets are also Euclidean closed.  A strong Zariski closed set $Z$
  is a (possibly infinite) intersection of principal strong Zariski closed subsets
  $V(f\sim g) = \{x\in \T^\Lambda \:\: | \:\: f(x) = g(x)\}$.  Since a polynomial $f\in \T[x_i
  \:\:|\:\: i\in \Lambda]$ has only finitely many terms corresponding to a finite subset $\supp f
  \subset \N^\Lambda$, its graph is a finite type polyhedron $\Gamma_f$ in $\T^{\supp f}\times \T$ crossed with
  $\T^{\N^\Lambda \smallsetminus \supp f}$.  The polyhedra $\Gamma_f$ and $\Gamma_g$ are both Euclidean
  closed, so their intersection is, and hence the set $V(f\sim g)$ is Euclidean closed.  Therefore
  strong Zariski closed sets are Euclidean closed.

  We now show that there is a basis for the Euclidean topology consisting of strong Zariski open sets.  A
  basis of open sets for the Euclidean topology is given by Euclidean open boxes:
  $\prod_{i\in \Lambda} J_i$ with each $J_i$ an open interval in $\T$, and all but finitely many of
  them are the whole of $\T$.  Each $J_i= (a_i, b_i)$ or $[-\infty,b_i)$ is strong Zariski open, as it is
  the complement of the strong Zariski closed set $V(x_i +a_i \sim x_i) \cap V(x_i + b_i \sim b_i) $ or
  $V(x_i +b_i \sim b_i)$, respectively. 
\end{proof}

\begin{remark}
As far as we are aware, this observation that closed subschemes generate the Euclidean topology goes
back originally to Mikhalkin in \cite[Prop. 2.22]{Mikhalkin}.
\end{remark}

\begin{theorem}
The strong Zariski topology on $\Trop_{univ}(X)$ coincides with the Berkovich topology.
\end{theorem}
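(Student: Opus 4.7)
The plan is to reduce to the affine case and apply Lemma \ref{lem:infinite-product-topology}, which identifies the strong Zariski topology on an infinite-dimensional affine space over $\T$ with the Euclidean product topology.

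First I would record a general fact: for any closed immersion $Z \hookrightarrow W$ of $\T$-schemes, the strong Zariski topology on $Z(\T)$ coincides with the subspace topology induced from $W(\T)$. Every closed subscheme of $Z$ is also a closed subscheme of $W$; conversely the pullback of any closed subscheme of $W$ along $Z \hookrightarrow W$ is a closed subscheme of $Z$ whose $\T$-points form the intersection with $Z(\T)$. So strong Zariski closed sets in $Z(\T)$ are exactly the intersections with $Z(\T)$ of strong Zariski closed sets in $W(\T)$.

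Next, for the affine case $X = \spec A$: the universal tropicalization $\Trop_{univ}(X)$ is a closed subscheme of $\widehat{X}_\T = \spec \T[M(A)]$, which itself sits as a closed subscheme of the infinite affine space $\mathbb{A}^{A \setminus \{0\}}_\T$ cut out by the congruences $x_a x_b \sim x_{ab}$ (integrality of $A$ ensures no zero-divisor relations appear). Combining the subspace observation above with Lemma \ref{lem:infinite-product-topology}, the strong Zariski topology on $\Trop_{univ}(X)(\T)$ is exactly the subspace topology induced from the Euclidean product topology on $\T^{A \setminus \{0\}}$. Under the moduli identification of Theorem \ref{thm:moduli-of-valuations}, a $\T$-point is a valuation $\alpha : A \to \T$, and the composite embedding into $\T^{A \setminus \{0\}}$ sends $\alpha$ to the tuple $(\alpha(a))_{a \neq 0}$. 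This is precisely the embedding that defines the Berkovich topology on $X^{\an}$ as the topology of pointwise convergence on valuations, so in the affine case the two topologies coincide.

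For general integral $X$, both topologies are assembled from their affine parts: Berkovich's construction glues the $U^{\an}$ as open subspaces along an open affine cover of $X$, while $\Trop_{univ}(X)$ is glued from the $\Trop_{univ}(U)$ via the open immersions $\widehat{U} \hookrightarrow \widehat{X}$ coming from monoid localization. The moduli identification of Theorem \ref{thm:moduli-of-valuations} is compatible with restriction to open affines, so the affine comparison transports to the global statement. The main obstacle I anticipate is precisely this gluing step: verifying that each $\Trop_{univ}(U)(\T)$ is a strong Zariski open subset of $\Trop_{univ}(X)(\T)$ whose induced subspace topology agrees with its intrinsic strong Zariski topology. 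This requires understanding how closed subschemes and bend congruences behave under the open immersions $\widehat{U}_\T \hookrightarrow \widehat{X}_\T$ arising from monoid localization, which is the local-to-global principle underlying the very construction of $\Trop_{univ}$; once in place, everything else reduces to Lemma \ref{lem:infinite-product-topology} and the moduli interpretation from Theorem \ref{thm:moduli-of-valuations}.
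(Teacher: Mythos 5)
Your approach is essentially the same as the paper's, which reduces to the affine case, invokes Lemma~\ref{lem:infinite-product-topology}, and identifies the resulting closed embedding into $\T^{A\setminus\{0\}}$ with the one defining the Berkovich topology. The gluing step you flag as a potential obstacle is in fact passed over silently in the paper's two-line proof; your closed-immersion observation and the explicit routing through the moduli description of Theorem~\ref{thm:moduli-of-valuations} make the argument more transparent without changing its substance.
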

\begin{proof}
Since $\Trop_{univ}(X)$ has an open cover by subschemes of the form $\Trop_{univ}(U)$ for $U$ an
affine open subscheme of $X$, it suffices to assume that $X$ is affine.  The claim follows directly
from Lemma \ref{lem:infinite-product-topology} and the fact that the Berkovich topology on $X^{an}$
coincides with the subspace topology it gets from the natural inclusion into $\T^{k[X]}$.
\end{proof}

\begin{example}
Let $k$ be an algebraically closed field that is complete with respect to a non-archimedean absolute
value $|-|$.  Consider $(\mathbb{A}^1_k)^{\mathit{an}}$;  the points of this space are
semivaluations on $k[t]$ that are compatible with the absolute value on $k$.  Given $a\in k$ and
$r\in \mathbb{R}_{>0}$, a Berkovich open disk is a subset or the form
\[
  D_\mathit{an}(a,r) = \{\nu \:\: | \:\: e^{-\nu(t-a)} < r \}.
\]
A fundamental system of neighborhoods for the Berkovich topology is given by the finite
intersections of these Berkovich open disks.  The complement of an open disk $D_\mathit{an}(a,r)^c$
is the strong Zariski closed set defined by the equation
\[
  x_{t-a} = x_{t-a} + (-\log r)x_1.
\]
Note that $x_1$ is the multiplicative unit.
\end{example}

\subsection{The map from the analytification to a tropicalization}

Let $Y$ be a locally integral $\Fun$-scheme and $X$ a $k$-scheme, with a closed embedding $\varphi: X
\hookrightarrow Y\times_\Fun k$.  Given a rank-one valuation $\nu : k \rightarrow \T$, there is a
canonical map $\pi$ from the Berkovich analytification to the set-theoretic tropicalization of $X$
with respect to $\varphi$,
\[
\pi: X^{\an} \to \trop_\varphi(X).
\]
This is a slight generalization of the map constructed by Payne in \cite{Payne}.  It can be
described on a suitable affine patch as follows. Suppose $X$ is given by $\spec A$ for some
$k$-algebra $A$, the $\Fun$-scheme $Y$ is $\spec B$ for some $\Fun$-algebra $B$, and the embedding
$\varphi$ is given by a surjective homomorphism $\varphi^\sharp: B\otimes_\Fun k \twoheadrightarrow
A$.  A point of the analytification is a semivaluation $w: A\to \T$ compatible with $\nu$, and a
point of the set-theoretic tropicalization is a $\T$-algebra homomorphism $q: B\otimes_\Fun \T \to
\T$ such that for each $\sum \lambda_i x_{b_i} \in\ker\varphi^\sharp$, the maximum of the set
$\{q(\nu(\lambda_i) x_{b_i})\}$ is either equal to $-\infty$ or is attained at least twice.  Given a point
$w\in (\spec A)^{\an}$, the composition
\[
B \hookrightarrow B\otimes_\Fun k \stackrel{\varphi^\sharp}{\to} A \stackrel{w}{\to} \T
\]
is multiplicative and so determines a $\T$-algebra homomorphism $\pi(w): B\otimes_\Fun \T \to \T$.

\begin{proposition}
  The homomorphism $\pi(w)$ lies in $\trop_\varphi(X)$.
\end{proposition}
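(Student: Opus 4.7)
My plan is to verify the defining condition of $\trop_\varphi(X)$ on an arbitrary element of $\ker\varphi^\sharp$ directly, using the valuation axioms for $w$.

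First I would unpack what $\pi(w)$ does on a monomial: $\pi(w)(x_b) = w(\varphi^\sharp(x_b)) = w(b)$, where I slightly abuse notation by identifying $b \in B$ with its image in $A$. Then for any element $f = \sum_i \lambda_i x_{b_i} \in \ker\varphi^\sharp$, I must show that the tropical expression
\[
\pi(w)\Bigl(\sum_i \nu(\lambda_i) x_{b_i}\Bigr) = \max_i \bigl\{ \nu(\lambda_i) + w(b_i) \bigr\}
\]
is either $-\infty$ or is attained at two or more indices; this is exactly the Kajiwara--Payne description of a point of $\trop_\varphi(X)$ recalled just above.

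Next, I would argue by contradiction: assume the maximum is finite and is attained at a unique index $i_0$. Because $f$ lies in $\ker \varphi^\sharp$, we have $\sum_i \lambda_i b_i = 0$ in $A$, and hence $\lambda_{i_0} b_{i_0} = -\sum_{i \neq i_0} \lambda_i b_i$. Applying $w$, the multiplicativity and compatibility with $\nu$ give
\[
w(\lambda_{i_0} b_{i_0}) = \nu(\lambda_{i_0}) + w(b_{i_0}),
\]
while subadditivity, together with $w(-a) = w(a)$ (which follows from $\nu(-1)=1_\T$ and the compatibility condition $w(\lambda a) = \nu(\lambda) w(a)$), gives
\[
w\Bigl(-\sum_{i \neq i_0} \lambda_i b_i\Bigr) \leq \max_{i \neq i_0} \bigl\{ \nu(\lambda_i) + w(b_i) \bigr\}.
\]
The left-hand side is our putative strict maximum, while the right-hand side is strictly smaller by hypothesis; this is the desired contradiction.

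I do not anticipate a serious obstacle: the main check is simply that the valuation data for $w$ (compatibility, multiplicativity, subadditivity, and the identity $w(-a)=w(a)$) are exactly what is needed to rule out a tropical polynomial with a unique maximizing term on a genuine relation in $A$. The only subtlety worth flagging is the mild one of keeping track of signs and of the case where some $b_i$'s are zero in $A$ (so that $w(b_i) = -\infty$ and those terms contribute $-\infty$ to the max and can be ignored), both of which are handled without incident by the above argument.
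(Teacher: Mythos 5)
Your proof is correct and uses essentially the same idea as the paper: after computing that $\pi(w)$ sends the coefficient-wise valuation of a kernel element $f$ to $\max_i\{\nu(\lambda_i)+w(b_i)\}$, you invoke the non-archimedean property (subadditivity plus $w(-a)=w(a)$) to rule out a unique finite maximizer. The paper packages the same ingredient as the observation that strict subadditivity of $w\circ\varphi^\sharp$ forces equality of the two compared terms, whereas you argue directly by isolating the putative strict-max term $\lambda_{i_0}b_{i_0}$ as minus the sum of the rest and applying subadditivity; both are standard restatements of the same non-archimedean dichotomy, so the arguments coincide in substance.
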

\begin{proof}
  Let $f=\sum \lambda_i x_{b_i}$ be an element in the kernel of $\varphi^\sharp$, so $w \circ \varphi^\sharp (\sum \lambda_i
  x_{b_i}) = -\infty$.  On the other hand, taking the coefficient-wise valuation, $\nu(f) = \sum
  \nu(\lambda_i)x_{b_i}$, and so
\begin{align*}
\pi(w) (\nu(f)) & = \sum \nu(\lambda_i) w\circ \varphi^\sharp (x_{b_i}) \\
                      & = \sum w\circ\varphi^\sharp (\lambda_i x_{b_i}).
\end{align*}
Thus we have 
\begin{equation}\label{eq:wphisharp-ineq}
-\infty = w \circ \varphi^\sharp (\sum \lambda_i x_{b_i}) \leq \sum w\circ\varphi^\sharp (\lambda_i x_{b_i}).
\end{equation}
Since $w\circ \varphi^\sharp: B\otimes_\Fun k \to \T$ is a semivaluation and $\T$ is totally ordered, a strict inequality
\[w\circ \varphi^\sharp (a+b) < w\circ \varphi^\sharp (a) + w\circ \varphi^\sharp (b)\] implies $w\circ \varphi^\sharp (a) = w\circ \varphi^\sharp (b)$, and
so we can conclude from the above inequality \eqref{eq:wphisharp-ineq} that the maximum of $\{w\circ\varphi^\sharp (\lambda_i
x_{b_i})\} \subset \T$ occurs at least twice (or there is only a single term and it is $-\infty$).  This
shows that $\pi(w)$ is indeed in the set-theoretic tropicalization $\trop_\varphi(X)$.
\end{proof}

\begin{proposition}
Upon passing to $\T$-points, the canonical map of $\T$-schemes \[\Trop_{univ}(X) \to
\Trop_\varphi(X)\] reduces to the map $\pi: X^{\an} \to \trop_\varphi(X)$.
\end{proposition}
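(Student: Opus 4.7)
The plan is to reduce to the affine case and then chase a valuation through both descriptions of the map explicitly. Since the statement concerns $\T$-points and both the canonical $\T$-scheme map $\Trop_{univ}(X) \to \Trop_\varphi(X)$ and Payne's map $\pi$ are built locally, I would first restrict to an affine patch on which $X = \spec A$, $Y = \spec B$, and $\varphi$ corresponds to a surjection $\varphi^\sharp : B \otimes_\Fun k \twoheadrightarrow A$.

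Next, I would identify the $\T$-points on both sides. By Theorem \ref{thm:moduli-of-valuations}, the $\T$-points of $\Trop_{univ}(\spec A)$ are exactly the valuations $w : A \to \T$ compatible with $\nu$, which is the underlying set of $X^{\an}$; on the target, the $\T$-points of $\Trop_\varphi(X)$ are $\trop_\varphi(X)$ by \cite[Theorem 6.3.1]{GG1}. To compute the induced map, I would invoke Proposition \ref{prop:globalfactor}, which factors $\varphi$ uniquely as $X \hookrightarrow \widehat{X} \xrightarrow{\psi \times_\Fun k} Y \times_\Fun k$, where $\psi : M(X) \to Y$ is the adjoint of $\varphi$; on algebras, $\psi$ is the monoid homomorphism $B \to M(A)$ sending $b \mapsto \varphi^\sharp(x_b)$. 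The canonical map of Proposition \ref{prop:univ-property-of-univ-trop} is built by applying the functoriality of scheme-theoretic tropicalization \cite[Proposition 6.4.1]{GG1} to this factorization, so on $\T$-points it amounts to precomposition with $\psi$. A valuation $w : A \to \T$, viewed as a multiplicative map $M(A) \to \T$ extending to $M(A) \otimes_\Fun \T \to \T$, is therefore sent to the $\T$-algebra homomorphism $B \otimes_\Fun \T \to \T$ determined on generators by $x_b \mapsto w(\varphi^\sharp(x_b))$, which is precisely the definition of $\pi(w)$.

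The essentially only substantive step is verifying that the functoriality map of \cite[Proposition 6.4.1]{GG1}, when applied to the factorization $X \hookrightarrow \widehat{X} \to Y \times_\Fun k$, really does reduce on $\T$-points to this precomposition; this is a direct unraveling, but it is the point where one must carefully match the moduli interpretation of $\T$-points of $\Trop_{univ}(X)$ from Theorem \ref{thm:moduli-of-valuations} with the formula for the induced map on coordinate semirings. I do not anticipate any genuine obstacle beyond this bookkeeping.
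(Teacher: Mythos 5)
Your proof is correct and takes essentially the same approach as the paper: reduce to the affine case, factor $\varphi$ through the universal embedding via Proposition \ref{prop:globalfactor}, identify the induced map on tropicalizations as precomposition with the adjoint $B \to M(A)$, and match the resulting formula $x_b \mapsto w(\varphi^\sharp(x_b))$ with the definition of $\pi(w)$. The only cosmetic difference is that you invoke Theorem \ref{thm:moduli-of-valuations} to identify $\T$-points of $\Trop_{univ}$ as valuations, whereas the paper simply notes that such a $\T$-point is determined by its restriction $w^\flat : M(A) \to M(\T)$ to monomials --- both are just bookkeeping.
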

\begin{proof}
  This is just a mater of unwinding the definitions.  Proposition \ref{prop:globalfactor} provides the canonical map $\widehat{X} \to Y\times_\Fun k$
  for which we have a commuting diagram,
\[
\begin{diagram}
\node[2]{X} \arrow{sw} \arrow{se,t}{\varphi} \\
\node{\widehat{X}} \arrow[2]{e,t}{\widehat{\pi}} \node[2]{Y\times_\Fun k.}
\end{diagram}
\]
By restricting attention to suitable affine patches, this diagram is represented at the level of
$k$-algebras by a diagram
\[
\begin{diagram}
\node[2]{A} \\
\node{M(A)\otimes_\Fun k} \arrow{ne,t}{ev} \node[2]{B\otimes_\Fun k,} \arrow[2]{w,t}{\widehat{\pi}^\sharp} \arrow{nw,t}{\varphi^\sharp}
\end{diagram}
\]
for a $k$-algebra $A$ and an $\Fun$-algebra $B$.  The bottom arrow $\widehat{\pi}^\sharp$ in this diagram is induced
by the morphism of $\Fun$-algebras
\[
\varphi^\flat : B\to M(A)
\]
 that is adjoint to
$\varphi^\sharp$; i.e., $\varphi^\flat$ sends $x_b\in B\otimes_\Fun k$ to $\varphi^\sharp(x_b)$ (thought of as an element
of $M(A)$).  The associated morphism of tropicalizations,
\[
B\otimes_\Fun \T / \bend\trop(\ker \varphi^\sharp) \to M(A)\otimes_\Fun \T / \bend\trop(\ker ev)
\]
is also induced by $\varphi^\flat$.  Consider a $\T$-point \[w: M(A)\otimes_\Fun \T /
\bend\trop(\ker ev) \to \T\] of the universal tropicalization; it is entirely determined by the
morphism of $\Fun$-algebras $w^\flat: M(A) \to M(\T)$ given by restricting $w$ to monomials.  The
map $\Trop_{univ}(X) \rightarrow \Trop_\varphi(X)$ sends $w$ to the $\T$-point corresponding to the composition of $\Fun$-algebra morphisms 
\[B\stackrel{\varphi^\flat}{\to} M(A) \stackrel{w^\flat}{\to} M(\T),\]
and one easily sees that this agrees with the description of $\pi(w)$ we gave above.
\end{proof}

By assembling Proposition \ref{prop:univ-property-of-univ-trop}, Theorem
\ref{thm:moduli-of-semivaluations}, and the above proposition, we have proven Theorem
\ref{thm:UnivBerk-scheme-theoretic} from the introduction.

\section{Limits of tropicalizations}

Fix an integral scheme $X$ over a valued ring $\nu :R \rightarrow S$, and let $\mathcal{C}$
denote the category of `locally integral $\Fun$-embeddings of $X$'; that is, an object of
$\mathcal{C}$ is a locally integral $\Fun$-scheme $Y$ together with a closed embedding $X
\hookrightarrow Y\times_\Fun R$, and a morphism is a morphism of $\Fun$-schemes inducing a
commutative triangle of $R$-schemes.  By \cite[Proposition 6.4.1]{GG1}, scheme-theoretic
tropicalization yields a covariant functor $\Trop_\bullet(X) : \mathcal{C} \rightarrow \Sch_S$.

Because of its universal property (Proposition \ref{prop:globalfactor}), the universal embedding
$X\hookrightarrow \widehat{X}$ is an initial object in $\mathcal{C}$.  Thus we trivially have that
the universal tropicalization is the limit over $\mathcal{C}$ of all tropicalizations of $X$.

\begin{proposition}
There is a canonical isomorphism, 
\[\lim_{\varphi\in\mathcal{C}} \Trop_\varphi(X) \cong \Trop_{univ}(X).\]
\end{proposition}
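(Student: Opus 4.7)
The plan is to observe that this proposition is essentially a categorical tautology once the two preceding facts are in hand: (i) by Proposition~\ref{prop:globalfactor}, the universal embedding $X\hookrightarrow\widehat{X}$ is an initial object of $\mathcal{C}$ (and it genuinely lies in $\mathcal{C}$ because $X$ integral implies $M(X)$ locally integral via Lemma~\ref{lem:integral}); and (ii) the tropicalization assignment $\Trop_\bullet(X)\colon\mathcal{C}\to\Sch_S$ is a covariant functor by the functoriality recalled in \S\ref{sec:tropreview}.

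First, I would assemble the cone. For each $\varphi\in\mathcal{C}$, the unique $\mathcal{C}$-morphism from the initial object $X\hookrightarrow\widehat{X}$ to $\varphi$ yields, via the functor $\Trop_\bullet(X)$, a canonical morphism of $S$-schemes
\[
p_\varphi\colon \Trop_{univ}(X)\longrightarrow \Trop_\varphi(X),
\]
which is exactly the map produced in Proposition~\ref{prop:univ-property-of-univ-trop}. Functoriality guarantees that these maps are compatible with the morphisms of $\mathcal{C}$, so $\{p_\varphi\}_{\varphi\in\mathcal{C}}$ constitutes a cone over $\Trop_\bullet(X)$ with apex $\Trop_{univ}(X)$.

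Next, I would verify the universal property. Given any $S$-scheme $W$ together with a compatible family of morphisms $q_\varphi\colon W\to\Trop_\varphi(X)$ indexed by $\varphi\in\mathcal{C}$, compatibility with the morphisms from the initial object forces every $q_\varphi$ to factor as $q_\varphi=p_\varphi\circ q_{\widehat{X}}$. Hence $q_{\widehat{X}}\colon W\to\Trop_{univ}(X)$ is the unique morphism inducing the family, which is the limit property.

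This is really an instance of the general fact that a covariant functor out of a category with an initial object has its limit computed as the value at that initial object, so there is no serious obstacle — the only point that needs any care is that $\widehat{X}$ really does lie in $\mathcal{C}$ (i.e.\ that $M(X)$ is locally integral), which is exactly the integrality hypothesis on $X$ combined with Lemma~\ref{lem:integral}. The resulting isomorphism is canonical because both sides represent the same functor of cones on $\Trop_\bullet(X)$.
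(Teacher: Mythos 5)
Your proposal is correct and follows exactly the same route as the paper: the paper observes that $X\hookrightarrow\widehat{X}$ is an initial object of $\mathcal{C}$ by Proposition~\ref{prop:globalfactor}, that $\Trop_\bullet(X)$ is a covariant functor, and then declares the result ``trivial'' --- you have simply unpacked what that triviality amounts to (the cone from the value at the initial object, and the universal property). The only additional touch, which is appropriate, is your explicit remark that $M(X)$ being locally integral (via Lemma~\ref{lem:integral} and the integrality of $X$) is what guarantees the universal embedding actually lies in $\mathcal{C}$.
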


It is more interesting to consider the limit over certain subcategories of $\mathcal{C}$, such as
the subcategory $\mathcal{C}_{\mathbb{A}}$ of all embeddings into affine spaces  $\mathbb{A}^n$ (for
varying finite $n$) and torus-equivariant morphisms. Payne showed that if $X$ is an affine variety,
then the limit over $\mathcal{C}_{\mathbb{A}}$ of the set-theoretic tropicalizations of $X$ (considered as
topological spaces) is homeomorphic to $X^{\an}$ \cite[Theorem 1.1]{Payne}.  In the following
section we give a scheme-theoretic refinement of this theorem.

\subsection{Affine embeddings}

Let $X=\spec A$, for $A$ a finitely generated $R$-algebra.  The category $\mathcal{C}^{\op}_{\mathbb{A}}$
admits the following explicit algebraic description.
\begin{itemize}
\item[Objects:] Finitely generated free $\Fun$-algebras (i.e., finite rank free abelian monoids-with-zero) $B$ equipped with a surjective $R$-algebra homomorphism $B \otimes_\Fun R \twoheadrightarrow A$; equivalently, there is a specified $\Fun$-algebra homomorphism $B \rightarrow M(A)$ whose image generates $A$ as an $R$-algebra.
\item[Arrows:] Homomorphisms of $\Fun$-algebras $B_1 \rightarrow B_2$ whose scalar extension commutes with the maps $B_i \otimes_\Fun R \twoheadrightarrow A$; equivalently, these are $\Fun$-algebra homomorphisms over $M(A)$.
\end{itemize}

Functoriality of tropicalization therefore gives in this case a functor $\mathcal{C}_{\mathbb{A}}
\rightarrow \Sch_S$, or $\mathcal{C}^{\op}_{\mathbb{A}} \rightarrow S\text{-alg}$.

\begin{theorem}\label{thm:affine-inv-limit}
  Let $A$ be a finitely generated integral $R$-algebra, and suppose $R$ is equipped with a semivaluation
  $\nu: R\to S$. The universal tropicalization of $X = \spec A$ is isomorphic as an $S$-scheme to the
  limit of tropicalizations in affine spaces:
\[\Trop^\nu_{univ}(X) \cong \lim_{\varphi\in \mathcal{C}_{\mathbb{A}}} \Trop^\nu_\varphi(X).\]
\end{theorem}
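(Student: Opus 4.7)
The plan is to pass to algebras via global sections: since $X = \spec A$ is affine, so are $\Trop^\nu_{univ}(X)$ and $\Trop^\nu_\varphi(X)$ for every $\varphi \in \mathcal{C}_{\aff}$, and the claimed limit of $S$-schemes translates into a colimit of $S$-algebras. I therefore aim to show
\[
M(A) \otimes_\Fun S / \bend\trop(\ker ev) \;\cong\; \colim_{(B,\varphi) \in \mathcal{C}^{\op}_{\aff}} \bigl( B \otimes_\Fun S / \bend\trop(\ker \varphi^\sharp) \bigr).
\]
The cocone from the right-hand side to the left is furnished by functoriality of tropicalization (Proposition~\ref{prop:univ-property-of-univ-trop}) applied to the $\Fun$-algebra maps $\varphi^\flat : B \to M(A)$ adjoint to the quotients $\varphi^\sharp$; the goal is to prove the induced comparison morphism is an isomorphism.

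The key preliminary step is the identification of $M(A)$ with the colimit, in $\Fun$-algebras, of the forgetful functor $(B,\varphi) \mapsto B$ on $\mathcal{C}^{\op}_{\aff}$. Surjectivity of the canonical map $\colim B \to M(A)$ is immediate: for any $a \in M(A)$, append an extra free generator $z$ mapping to $a$ to a fixed finite presentation of $A$ to produce an object of $\mathcal{C}^{\op}_{\aff}$ whose image contains $a$. For injectivity, given $b_i \in B_i$ with $\varphi_i^\flat(b_i) = a$, I would form the common refinement $B' = B_1 \otimes_\Fun B_2 \otimes_\Fun \Fun[z]$ with $z \mapsto a$ and use the endomorphism of $B'$ in $\mathcal{C}^{\op}_{\aff}$ sending $z$ to $b_i$ (well-defined since it covers the identity on $M(A)$) to identify $b_1$, $z$, and $b_2$ in the colimit. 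Because $-\otimes_\Fun S$ is a left adjoint and therefore preserves colimits, this yields $M(A) \otimes_\Fun S \cong \colim(B \otimes_\Fun S)$.

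Consequently the right-hand side of the target isomorphism becomes $M(A) \otimes_\Fun S / K$, where $K$ is the congruence on $M(A) \otimes_\Fun S$ generated by the images of $\bend\trop(\ker \varphi^\sharp)$ under the maps $B \otimes_\Fun S \to M(A) \otimes_\Fun S$. The inclusion $K \subset \bend\trop(\ker ev)$ is immediate, because commutativity of the base-change adjunction ($ev \circ (\varphi^\flat \otimes_\Fun R) = \varphi^\sharp$) sends $\ker \varphi^\sharp$ into $\ker ev$, and so every generator of $K$ is a bend relation of an element already in $\ker ev$. For the reverse inclusion, the key input is Proposition~\ref{prop:strongtropbas}: its explicit strong tropical basis for $\ker ev$ consists of relations $\lambda x_a - x_{\lambda a}$ and $x_a + x_b + x_c$ (with $a+b+c = 0$), each involving only finitely many elements of $A$, so each lies in $\ker \varphi^\sharp$ for any $\varphi \in \mathcal{C}_{\aff}$ large enough to contain the relevant elements in its image, showing $\bend\trop(\ker ev) \subset K$.

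I expect the main obstacle to be the colimit computation in the first step, since the category of finitely generated free $\Fun$-algebras is not closed under filtered colimits in $\Fun$-alg, and one must verify carefully that the auxiliary tensor-product refinements and self-morphisms used to identify repeated elements really are morphisms in $\mathcal{C}^{\op}_{\aff}$ (i.e., that they cover the identity on $M(A)$). Once this colimit computation is settled, the rest of the argument is a direct combination of the preservation of colimits by base change and the strong tropical basis supplied by Proposition~\ref{prop:strongtropbas}.
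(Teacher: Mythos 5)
Your proof follows essentially the same strategy as the paper: reduce to algebras, establish that $M(A) \cong \colim_{\mathcal{C}^{\op}_{\aff}} B$, use preservation of colimits by scalar extension, get one inclusion of congruences from functoriality, and get the other from the strong tropical basis of Proposition~\ref{prop:strongtropbas}, whose generators involve only finitely many elements of $A$ and hence arise at some finite stage. The only notable variation is in the injectivity half of the colimit lemma, where you form $B_1 \otimes_\Fun B_2 \otimes_\Fun \Fun[z]$ and exploit self-endomorphisms, while the paper glues the generating sets along the two preimages of $a$; both work, though you should drop the claim that $K \subset \bend\trop(\ker ev)$ is ``immediate'' from the kernel inclusion alone, since that step genuinely uses functoriality of $\bend\trop$ (colliding generators mean bend relations do not push forward termwise), which you have in any case already invoked via Proposition~\ref{prop:univ-property-of-univ-trop}.
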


We begin with a lemma, which is used to show that the limit is embedded in the affine $S$-scheme $\spec M(A)\otimes_\Fun S$.  It says that the limit of the affine spaces in which $X$ embeds is $\widehat{X}$.  Let $F: \mathcal{C}^{\op}_{\mathbb{A}} \to \Fun\text{-alg}$ be the forgetful functor sending $B\otimes_\Fun R \twoheadrightarrow A$ to $B$.

\begin{lemma}\label{lem:colim}
$\colim_{\mathcal{C}^{\op}_{\mathbb{A}}} F \cong M(A)$. 
\end{lemma}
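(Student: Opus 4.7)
The plan is to exhibit $M(A)$, together with the structural maps $\beta_B : B \to M(A)$ attached to each object $(B,\beta_B) \in \mathcal{C}^{\op}_{\aff}$, as a universal cocone under $F$. Since morphisms in $\mathcal{C}^{\op}_{\aff}$ are by definition $\Fun$-algebra homomorphisms over $M(A)$, the $\beta_B$ automatically assemble into a cocone, and it therefore remains to verify universality: for any $\Fun$-algebra $N$ and compatible cocone $(\phi_B : B \to N)_B$, there must be a unique $\Fun$-algebra map $\psi : M(A) \to N$ satisfying $\psi \circ \beta_B = \phi_B$ for every $B$.

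Fix once and for all a finite generating set $a_1,\dots,a_n$ of $A$ as an $R$-algebra. For each element $a \in M(A)$, I would form the auxiliary object $C_a := \Fun[z_0,z_1,\dots,z_n]$ with structural map $z_0 \mapsto a$ and $z_j \mapsto a_j$ for $j\geq 1$; since the $a_j$ generate $A$ as an $R$-algebra, this really is an object of $\mathcal{C}^{\op}_{\aff}$. Uniqueness of $\psi$ is then immediate, because $\psi$ is forced to send $a = \beta_{C_a}(z_0)$ to $\phi_{C_a}(z_0)$, which prescribes $\psi$ on all of $M(A)$.

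The substance of the proof is well-definedness of the rule $\psi(a) := \phi_B(b)$ over an arbitrary choice of $(B,b)$ with $\beta_B(b) = a$. Given two such witnesses $(B_1,b_1)$ and $(B_2,b_2)$, my plan is to build a zigzag connecting them. First enlarge each $B_i$ to $B_i' := B_i \otimes_\Fun \Fun[w_1,\dots,w_n]$ with structural map extending $\beta_{B_i}$ via $w_j \mapsto a_j$; the inclusions $B_i \hookrightarrow B_i'$ are then morphisms in $\mathcal{C}^{\op}_{\aff}$. The assignments $z_0 \mapsto b_i$, $z_j \mapsto w_j$ next define $\Fun$-algebra homomorphisms $g_i : C_a \to B_i'$ compatible with the structural maps, producing the diagram
\[
B_1 \longrightarrow B_1' \longleftarrow C_a \longrightarrow B_2' \longleftarrow B_2
\]
of morphisms in $\mathcal{C}^{\op}_{\aff}$, along which cocone compatibility forces $\phi_{B_1}(b_1) = \phi_{C_a}(z_0) = \phi_{B_2}(b_2)$.

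The main obstacle is precisely the need for the enlargements $B_i \hookrightarrow B_i'$: a direct morphism $C_a \to B_i$ would require each $a_j$ to lie in the image of $\beta_{B_i}$, which is not guaranteed, and tensoring in the free generators $w_j$ is the natural remedy. Once well-definedness is in hand, checking that $\psi$ is a $\Fun$-algebra homomorphism is routine: given $a = \beta_{B_1}(b_1)$ and $a' = \beta_{B_2}(b_2)$, the product $aa'$ is witnessed by $b_1 b_2$ inside $B_1 \otimes_\Fun B_2$, and multiplicativity of $\psi$ transfers from multiplicativity of each $\phi_B$ via cocone compatibility.
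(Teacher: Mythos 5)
Your proof is correct, but it is organized differently from the paper's. The paper first notes that the colimit $Z:=\colim F$ exists (because abelian monoids-with-zero form a cocomplete category), obtains the canonical comparison map $Z\to M(A)$ from the universal property, and then checks directly that this map is bijective --- surjectivity from the existence, for each $a\in M(A)$, of an object hitting $a$, and injectivity by gluing two witnessing generating sets $\mathcal{S}_1,\mathcal{S}_2$ along the two representatives to form a single object $\Fun[\mathcal{T}]$ receiving arrows from both. You instead bypass the abstract existence of the colimit and verify the universal property of $M(A)$ by hand: the structure maps $\beta_B$ form a cocone since morphisms in $\mathcal{C}^{\op}_{\aff}$ live over $M(A)$, uniqueness of the mediating map $\psi$ is forced via the auxiliary object $C_a$, and well-definedness is established by a zigzag $B_1\to B_1'\leftarrow C_a\to B_2'\leftarrow B_2$ through the coproducts $B_i'=B_i\otimes_\Fun\Fun[w_1,\dots,w_n]$. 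Both arguments rely on the same structural facts --- that one can always enlarge a generating set and that any element of $M(A)$ is hit by some object --- but your route is entirely elementary and self-contained (you never need to invoke cocompleteness of a background category), at the cost of a slightly longer zigzag where the paper uses a single cospan via the pushout $\mathcal{T}=\mathcal{S}_1\cup_{a_1'\sim a_2'}\mathcal{S}_2$. Your sketch of multiplicativity via $B_1\otimes_\Fun B_2$ and the (correctly labelled routine) checks that $\psi$ preserves $0$ and $1$ complete the argument; note also that your use of a fixed finite generating set $a_1,\dots,a_n$ is legitimate here since $A$ is assumed to be a finitely generated $R$-algebra in this section.
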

\begin{proof}
  First, note that the colimit exists because the category of abelian monoids is cocomplete and the
  colimit of a diagram of monoids-with-zero clearly has a zero element and is the colimit in the
  subcategory of monoids-with-zero.  Let $Z$ denote this colimit.  Since arrows in $\mathcal{C}^{\op}_{\mathbb{A}}$ are
  $\Fun$-algebra morphisms over $M(A)$, the universal property of the colimit yields a canonical morphism
  of $\Fun$-algebras $Z\to M(A)$.  We show that this map is surjective and injective.

  To show that each $a\in M(A)$ is in the image of $Z \rightarrow M(A)$, it suffices to show that
  there is an object $B \otimes_\Fun R \twoheadrightarrow A$ of $\mathcal{C}^{\op}_{\mathbb{A}}$ whose restriction $B
  \rightarrow M(A)$ contains $a$ in its image.  Any finite set $\mathcal{S}\subset A$ of $R$-algebra
  generators containing $a$ yields a surjection $\Fun[x_1,\ldots,x_{|\mathcal{S}|}]\otimes_\Fun R =
  R[x_1,\ldots,x_{|\mathcal{S}|}] \twoheadrightarrow A$ with the desired property.
 
  Now we prove injectivity. Suppose $a\in A$ and $a_1, a_2 \in Z$ are two elements that both map to
  $a$.  Each $a_i$ can be represented by an element $a'_i$ in some finitely generated free
  $\Fun$-algebra $\Fun[\mathcal{S}_i]$ over $M(A)$, and without loss of generality we can assume
  $a_i'\in \mathcal{S}_i$.  Let $\mathcal{T} := \mathcal{S}_1 \cup_{a_1'\sim a_2'}\mathcal{S}_2$.
  We have a set-map $\mathcal{T} \rightarrow A$ induced by the maps $\mathcal{S}_i \rightarrow A$,
  since $a_1'$ and $a_2'$ have the same image in $A$.  These maps $\mathcal{S}_i \rightarrow A$
  factor through the inclusions $\mathcal{S}_i \hookrightarrow \mathcal{T}$, so the image of
  $\mathcal{T}$ in $A$ generates $A$ as a $R$-algebra, and hence $\Fun[\mathcal{T}] \rightarrow
  M(A)$ is an object of $\mathcal{C}^{\op}_{\mathbb{A}}$.  Moreover, the inclusions $\mathcal{S}_i \hookrightarrow
  \mathcal{T}$ induce arrows in $\mathcal{C}^{\op}_{\mathbb{A}}$ under which $a_1'$ and $a_2'$ are identified.  Applying the
  functor $F$ to these two arrows yields a pair of $\Fun$-algebra morphisms $\Fun[\mathcal{S}_i]
  \rightarrow \Fun[\mathcal{T}]$ for which the images of $a'_1$ with $a'_2$ coincide, and hence the
  images of these elements in the colimit $Z$ must be identified as well; i.e., $a_1= a_2$ in $Z$.
\end{proof}

We now turn to the proof of Theorem \ref{thm:affine-inv-limit}.
\begin{proof}
  Let $G : \mathcal{C}^{\op}_{\mathbb{A}} \rightarrow S$-alg be the functor sending an affine embedding to the algebra of
  global sections of the structure sheaf of the corresponding tropicalization:
\[
(B \otimes_\Fun R \stackrel{\psi}{\twoheadrightarrow} A) \mapsto
 B\otimes_\Fun S / \bend\trop(\ker \psi).
\] 
We must show that $V:= \colim_{\mathcal{C}^{\op}_{\mathbb{A}}}G$ is isomorphic to $W:= M(A) \otimes_\Fun S
/ \bend\trop(\ker ev)$. Functoriality of tropicalization applied to Proposition
\ref{prop:globalfactor} yields, by the universal property of the colimit, a canonical map of
$S$-algebras $V\to W$.  Since colimits commute with tensor products and quotients, it follows from
Lemma \ref{lem:colim} that $V$ is a quotient of $M(A)\otimes_\Fun S$, say by a congruence $J$.
Then, since the map \[V = M(A)\otimes_\Fun S/J \rightarrow M(A) \otimes_\Fun S /
\bend\trop(\ker ev) = W\] is induced by the identity on $M(A)$, we
see that it is surjective and $J \subset \bend \trop(\ker ev)$.  We will show that
this inclusion on congruences is an equality, i.e., that every relation in the universal tropicalization appears
at some stage of the colimit diagram.
  
By Proposition \ref{prop:strongtropbas}, it suffices to consider the bend relations coming from the
two types of basis elements of the kernel of the evaluation map described in Proposition
\ref{kernel-of-ev}.  Let $a,b,c\in A$ satisfy $a+b+c=0$, and consider the congruence
$\bend(x_a+x_b+x_c)$ on $M(A)\otimes_\Fun S$.  Choose a set $\mathcal{S} \subset A$ of $R$-algebra
generators containing $a,b,c$, corresponding to an object $\psi : \Fun[\mathcal{S}]\otimes_\Fun R
\twoheadrightarrow A$ of $\mathcal{C}^{\op}_{\mathbb{A}}$.  Then $x_a + x_b +x_c \in \ker\psi$, so the
congruence $\bend \trop(\ker \psi)$, which defines the tropicalization $G(\psi)$, contains the bend
relations of $x_a + x_b + x_c \in \Fun[\mathcal{S}]\otimes_\Fun S$.  Since the isomorphism in Lemma
\ref{lem:colim} is induced by the structure maps in the objects of $\mathcal{C}^{\op}_{\mathbb{A}}$, the
map $G(\psi) \rightarrow V$ sends this polynomial to $x_a+x_b+x_c$ in $M(A)\otimes_\Fun S$.  Thus
the bend relations $\bend(x_a+x_b+x_c)$ are contained in the colimit congruence $J$.  Similarly, for
$\lambda\in R$ and $a\in A$, an $R$-algebra generating set $\mathcal{T} \subset A$ containing $a$
and $\lambda a$ yields an object $\gamma : \Fun[\mathcal{T}]\otimes_\Fun R \twoheadrightarrow A$ of
$\mathcal{C}^{\op}_{\mathbb{A}}$ satisfying $\lambda x_a + x_{\lambda a} \in \ker\gamma$.  In the
congruence defining the tropicalization $G(\gamma)$ we thus have the bend relation $\nu(\lambda)x_a
\sim x_{\lambda a}$, and hence this also holds in the colimit congruence $J$.
\end{proof}

\bibliographystyle{amsalpha}
\bibliography{bib}

\end{document}